\newtheorem{thm}{Theorem}[section]
\newtheorem{cor}[thm]{Corollary}
\newtheorem{lem}[thm]{Lemma}
\theoremstyle{definition}
\newtheorem{defi}[thm]{Definition}
\newtheoremstyle{rmk}
  {12pt}                   
  {12pt}                   
  {}                       
  {}                       
  {\normalfont\bfseries}   
  {.}                      
  {\newline}               
  {}
\theoremstyle{rmk}
\newtheorem{rmks}[thm]{Remarks}
\newtheorem{rmk}[thm]{Remark}
\renewcommand{\qedsymbol}{$\blacksquare$}
\newcommand{\sN} { \mathbb{N} }
\newcommand{\sZ} { \mathbb{Z} }
\newcommand{\sR} { \mathbb{R} }
\newcommand{\sQ} { \mathbb{Q} }
\newcommand{\esssup}{\mathop{\mathrm{ess\,sup}}\displaylimits}
\newcommand{\bea}{\begin{eqnarray*}}
\newcommand{\eea}{\end{eqnarray*}}
\begin{document}\parindent 0pt
\title{Survival and Growth of a Branching Random Walk in Random Environment}
\author{Christian Bartsch, Nina Gantert and Michael Kochler}
\date{\today}
\maketitle
We consider a particular Branching Random Walk in Random Environment (BRWRE) on $\sN_0$ started with one particle at the origin. Particles reproduce according to an offspring distribution (which depends on the location) and move either one step to the right (with a probability in $(0,1]$ which also depends on the location) or stay in the same place. We give criteria for local and global survival and show that global survival is equivalent to exponential growth of the moments. Further, on the event of survival the number of particles grows almost surely exponentially fast with the same growth rate as the moments.\\

\textsc{Keywords:} branching process in random environment, branching random walk\\
\textsc{AMS 2000 Mathematics Subject Classification:} 60J80
\section{Introduction}
\bigskip
We consider a particular Branching Random Walk in Random Environment (BRWRE) on $\sN_0$ started with 
one particle at the origin.
The environment is an i.i.d.\ collection of offspring distributions 
and transition probabilities. In our model particles can either move one step to the right or they can stay where they are. Given a realization of the environment, we consider a random cloud of particles evolving as follows.
We start the process with one particle at the origin, and then repeat the following two steps:
\begin{itemize}
\item
Each particle produces offspring independently of the other particles according to the offspring distribution at its location (and then it dies).
\item
Then all particles move independently of each other.
Each particle either moves to the right (with probability $h_x$, where $x$ is the location of the particle) or it stays at its position (with probability  $1-h_x$).
\end{itemize}
We are interested in survival and extinction of the BRWRE and in the connection between survival/extinction and the (expected) growth rate of the number of particles. Further, we  characterize the profile of the expected number of particles on $\sN_0$.
The question on survival/extinction is considered for particles moving to the left or to the right in a paper by Gantert, M\"{u}ller, Popov and Vachkovskaia, see \cite{gantert}. Our model is excluded by the assumptions in \cite{gantert} (Condition~E). The questions on the growth rates are motivated by a series of papers by Baillon, Clement, Greven and den Hollander, see \cite{greven1}, \cite{greven5}, \cite{greven2}, \cite{greven3} and \cite{greven4}, where a similar model starting with one particle at each location is investigated. Since in such a model the global population size is always infinite, the authors introduce different quantities to describe the local and global behaviour of the system. They apply a variational approach to analyse different
growth rates. We give a different (and easier) characterization of
the global survival regime, using an embedded branching process in random environment. For the connection between this paper and the model in \cite{greven2} see Remark~\ref{connection}.\\
To get results for the growth of the global population (Theorem~\ref{existenz des limes} and Theorem~\ref{thm GS und Zn}) it is useful to investigate the local behaviour of the process which is done with the help of the function $\beta$ in Theorem~\ref{prop local growth}. The function $\beta$ describes the profile of the expected number of particles. However, $\beta$ is not very explicit: its existence follows from the subadditive ergodic theorem. In the proofs of these theorems we follow the ideas of a paper by Comets and Popov \cite{popov}. An important difference to \cite{popov} is that in our model particles can have no offspring. To determine the growth rate of the population, we have to condition on the event of survival.\\
If $h\equiv 1$, the spatial component is trivial (in this case, all particles at time $n$ are located at $n$) and the model reduces to the well-known branching process in random enviroment, see \cite{tanny}. Our results can be interpreted as extensions of the results in \cite{tanny} for processes in time and space.

The paper is organized as follows. In Section 2 we give a formal description of our model. Section 3 contains the results, Section 4 some remarks and Section~5 the proofs. At last, in Section 6 we provide examples and pictures.

\section{Formal Description of the Model}
The considered BRWRE will be constructed in two steps, namely we first choose an environment and then let the particles reproduce and move in this environment.
\vspace{12pt}\\
\textbf{Step I}\ (Choice of the environment)\vspace{2mm}\\
First, define
$${\cal M}:=\Big\{ (p_i)_{i \in\sN_0}:p_i\geq0, \sum_{i=0}^\infty p_i=1 \Big\}$$
as the set of all offspring distributions (i.e.\ probability measures on $\sN_0$).
Then, define
$$\Omega:= {\cal M} \times (0,1]$$
as the set of all possible choices for the local environment, now also containing the local drift parameter. Let $\alpha$ be a probability measure on $\Omega$ satisfying\renewcommand{\arraystretch}{2}\begin{equation}\label{ellipticity}
 \begin{array}{c}
  \alpha\left( \Big\{ \big((p_i)_{i\in\sN_0}, h \big) \in\Omega:  p_1=1  \Big\} \right)<1,\\
  \alpha\left( \Big\{ \big((p_i)_{i\in\sN_0}, h \big) \in\Omega: p_0\leq1-\delta, \, h \in [\delta,1] \Big\} \right)=1
 \end{array}
 \end{equation}\renewcommand{\arraystretch}{1}for some $\delta > 0$. The first property ensures that the branching is non-trivial and the second property is a common ellipticity condition which comes up in the context of survival of branching processes in random environment.\\
Let $\omega=(\omega_x)_{x\in\sN_0}=(\mu_x,h_x)_{x\in\sN_0}$ be an i.i.d.\ random sequence in $\Omega$ with distribution $\alpha^{\sN_0}=\bigotimes_{x\in\sN_0}\alpha$. We write $\textsf{P}:=\alpha^{\sN_0}$ and $\textsf{E}$ for the associated expectation. In the following $\omega$ is referred to as the random environment containing the offspring distributions $\mu_x$ and the drift parameters $h_x$. Let
$$m_x=m_x(\omega):=\sum_{k=0}^\infty k\mu_x\big(\{k\}\big)$$
be the mean offspring at location $x\in\sN_0$. We denote the essential supremum of $m_0$ by
$$M:=\esssup m_0$$
and furthermore we define
$$\Lambda:=\esssup\big(m_0 (1-h_0)\big).$$\vspace{12pt}\\
\textbf{Step II}\ (Evolution of the cloud of particles)\vspace{2mm}\\
Given the randomly chosen environment $(\omega_x)_{x \in \sN_0}=(\mu_x,h_x)_{x \in \sN_0}$, the cloud of particles evolves at every time $n \in \sN_0$. First each existing particle at some site $x \in \sN_0$ produces offspring according to the distribution $\mu_x$ independently of all other particles and dies. Then the newly produced particles move independently according to an underlying Markov chain starting at position $x$. The transition probabilities are also given by the environment. We will only consider a particular type of Markov chain on $\sN_0$ that we may call ``movement to the right with (random) delay''.
This Markov chain is determined by the following transition probabilities:
\begin{equation}
p_\omega(x,y)=\begin{cases}h_x& y=x+1\\1-h_x& y=x\\0& \text{otherwise}\end{cases}
\end{equation}
Note that due to the ellipticity condition \eqref{ellipticity}, $h_x$ is bounded away from 0 by some positive $\delta$.
Later, we consider the case that \textsf{P}$(h_0=h)=1$ for some $h\in(0,1]$ where the drift parameter is constant and analyse different survival regimes depending on the drift parameter $h$, see Theorem \ref{phase transitions}.\\
For $n\in\sN_0$ and $x\in\sN_0$ let us denote the number of particles at location $x$ at time $n$ by $\eta_n(x)$ and furthermore let
$$Z_n:=\sum_{x\in\sN_0}\eta_n(x)$$
be the total number of particles at time $n$.\\
We denote the probability and the expectation for the process in the fixed environment $\omega$ started with one particle at $x$ by $P_\omega^x$ and $E_\omega^x$, respectively. $P_\omega^x$ and $E_\omega^x$ are often referred to as ``quenched'' probability and expectation.\\
\newpage
Now we define two survival regimes:
\begin{defi}\label{survival}
Given $\omega$, we say that
\begin{enumerate}
 \item
there is \textit{Global Survival} (GS) if
$$ P_\omega^0\big(Z_n\to0\big)<1.$$
 \item
there is \textit{Local Survival} (LS) if
$$ P_\omega^0\big(\eta_n(x)\to0\big)<1$$
for some $x\in\sN_0$.
\end{enumerate}
\end{defi}
\begin{rmks}
\begin{enumerate}
\item
For fixed $\omega$ LS is equivalent to
$$P_\omega^0\big(\eta_n(x)\to0\ \forall\; x\in\sN_0\big)<1.$$
\item
Since the drift parameter is always positive, it is easy to see that for fixed $\omega$ LS and GS do not depend on the starting point in Definition \ref{survival}. Thus we will always assume that our process starts at $0$. For convenience we will omit the superscript 0 and use $P_\omega$ and $E_\omega$ instead.
\end{enumerate}
\end{rmks}
\section{Results}
The following results characterize the different survival regimes. As in \cite{gantert}, local and global survival do not depend on the realization of the environment but only on its law.

\begin{thm}\label{LS}
There is either LS for \textnormal{\textsf{P}}-a.e. $\omega$ or there is no LS for \textnormal{\textsf{P}}-a.e. $\omega$.\\
There is LS for \textnormal{\textsf{P}}-a.e. $\omega$ iff
$$\Lambda>1.$$
\end{thm}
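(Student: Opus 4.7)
The key observation is that, at each site $x$, the subtree of particles that are at $x$ and whose successive ancestors all stayed at $x$ is itself a Galton-Watson process: its offspring distribution is that of $\mathrm{Bin}(N,1-h_x)$ with $N\sim\mu_x$, and its mean is $m_x(1-h_x)$. I will call this the \emph{stay-at-}$x$ process. Local survival at $x$ reduces to the question whether this process survives, provided we can first reach $x$ from the origin. Since the criterion $\Lambda>1$ is a property of the law $\alpha$, the dichotomy in the first assertion will follow automatically from the characterization.

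\textbf{Direction $\Lambda>1 \Rightarrow$ LS.} By definition of the essential supremum, $\textsf{P}(m_0(1-h_0)>1)>0$, so by i.i.d.\ there are $\textsf{P}$-a.s.\ infinitely many $x$ with $m_x(1-h_x)>1$. Fix any such $x$. The stay-at-$x$ process is a supercritical Galton-Watson process, so from a single particle at $x$ it survives with positive quenched probability. To reach $x$ from $0$, I chain the ellipticity condition site by site: at each $y\in\{0,\ldots,x-1\}$ a given particle has at least one offspring moving to $y+1$ with probability at least $(1-\mu_y(\{0\}))\cdot h_y \ge \delta^2$, so the quenched probability that some descendant of the initial particle reaches $x$ is at least $\delta^{2x}>0$. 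Combining the reach event with the survival of the stay-at-$x$ process gives $P_\omega(\eta_n(x)\not\to 0)>0$, proving LS.

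\textbf{Direction $\Lambda\le 1 \Rightarrow$ no LS.} Now $\textsf{P}$-a.s.\ every site $x$ satisfies $m_x(1-h_x)\le 1$, so every stay-at-$x$ process is subcritical or critical. The ellipticity $h_x\ge\delta>0$ prevents its offspring distribution from being degenerate at $1$ (a short variance computation shows that $\mathrm{Bin}(N,1-h_x)\equiv 1$ would force $m_x h_x(1-h_x)=0$, impossible under ellipticity), so it goes extinct $P_\omega$-a.s.\ with an almost-surely finite total progeny, even in the critical case. I then induct on $x$: given that the total population ever visiting $x$ is $P_\omega$-a.s.\ finite, the number of particles moving from $x$ to $x+1$ is a finite sum of $\mathrm{Bin}(\cdot,h_x)$-random variables, hence a.s.\ finite, and the total population ever at $x+1$ is in turn a finite sum of independent copies of the a.s.\ finite stay-at-$(x+1)$ total progeny. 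Thus $\eta_n(y)=0$ eventually for every $y$, ruling out LS.

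\textbf{Main obstacle.} The only delicate point is the boundary case $\Lambda=1$: I must invoke the ellipticity condition in a nontrivial way to exclude the (otherwise surviving) degenerate critical stay-at-$x$ process. Everything else is a routine combination of Galton-Watson theory with the ellipticity lower bounds on $h_x$ and $1-\mu_x(\{0\})$.
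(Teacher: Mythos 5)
Your proposal follows essentially the same route as the paper: the stay-at-$x$ Galton--Watson process with mean $m_x(1-h_x)$, the Borel--Cantelli/ellipticity argument to reach a supercritical site when $\Lambda>1$, and the induction on sites via finite total progeny when $\Lambda\le 1$. One small point where you are more careful than the paper: you explicitly rule out the degenerate critical case where the stay-at-$x$ offspring law is $\delta_1$ (which would ``survive'' without growing); the paper's assertion $P_\omega^x(\eta_n(x)\to0)<1 \Leftrightarrow m_x(1-h_x)>1$ tacitly relies on this, and the cleanest justification is that ellipticity forces $h_x\ge\delta>0$, so $P(\text{zero stayers}) = g_x(h_x)>0$, where $g_x$ is the generating function of $\mu_x$ --- a slightly more direct argument than your variance computation, which as written needs the extra observations that $h_x=1$ gives zero stayers and $m_x=0$ gives zero offspring.
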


\begin{thm}\label{GS}
Suppose $\Lambda\leq1$.\\
There is either GS for \textnormal{\textsf{P}}-a.e. $\omega$ or there is no GS for \textnormal{\textsf{P}}-a.e. $\omega$.\\
There is GS for $\textnormal{\textsf{P}}$-a.e. $\omega$ iff
$$\textnormal{\textsf{E}}\left[\log\left(\frac{m_0h_0}{1-m_0(1-h_0)}\right)\right]>0.$$
\end{thm}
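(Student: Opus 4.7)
The plan is to pass to an embedded \emph{branching process in random environment} (BPRE) indexed by spatial sites and apply the classical Athreya--Karlin--Tanny survival criterion. For each $x \in \sN_0$, let $\xi_x$ denote the total number of descendants of a single particle placed at $x$ that ever reach site $x+1$. Given $\omega$, the distribution of $\xi_x$ depends only on $\omega_x = (\mu_x, h_x)$. A one-step recursion---the particle produces $K$ offspring with $E_\omega[K]=m_x$, each child either moves to $x+1$ with probability $h_x$ and contributes $1$, or stays at $x$ with probability $1-h_x$ and contributes an independent copy of $\xi_x$---yields
\begin{equation*}
\nu_x := E_\omega[\xi_x] = m_x h_x + m_x(1-h_x)\,\nu_x, \qquad \text{hence}\qquad \nu_x = \frac{m_x h_x}{1 - m_x(1-h_x)},
\end{equation*}
which is finite \textsf{P}-a.s.\ on $\{m_0(1-h_0)<1\}$. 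Setting $Y_n$ to be the number of distinct particles that ever visit site $n$ (so $Y_0=1$), the $Y_n$ visitors to site $n$ emit iid copies of $\xi_n$ into site $n+1$ conditional on $\omega_n$, so $(Y_n)_{n\ge 0}$ is a BPRE in the i.i.d.\ environment $(\omega_n)$ with quenched offspring mean $\nu_n$ at generation $n$.

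The second step is to identify $\{Z_n\not\to 0\}$ with $\{Y_n>0\ \forall n\}$. If $Y_n>0$ for every $n$, then at some time $t_n\ge n$ a particle visits site $n$, so $Z_{t_n}\ge 1$ and $Z_n\not\to 0$. Conversely, on $\{Z_n\not\to 0\}$ the maximal occupied site $M_t:=\max\{x:\eta_t(x)>0\}$ must tend to infinity: were it bounded by some $N$, all particles would be confined to $\{0,\dots,N\}$, but Theorem~\ref{LS} together with the assumption $\Lambda\le 1$ gives $\eta_n(x)\to 0$ at every single site, forcing $Z_n\to 0$---a contradiction. Hence GS of the BRWRE is equivalent, up to $P_\omega$-nullsets, to survival of the embedded BPRE $(Y_n)$.

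Applying the Athreya--Karlin--Tanny criterion to $(Y_n)$ then yields survival with positive quenched probability, for \textsf{P}-a.e.\ $\omega$, if and only if $\textsf{E}[\log \nu_0]>0$, which is precisely the stated inequality. The same dichotomy delivers the quenched 0-1 law for GS, since $P_\omega(\text{BPRE survives})>0$ is determined entirely by the deterministic sign of $\textsf{E}[\log\nu_0]$.

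The main obstacle I anticipate is the clean identification of $(Y_n)$ as a BPRE: the $Y_n$ particles at site $n$ appear at different times and belong to different subtrees of the genealogy, so one must check carefully that their contributions $\xi_n^{(1)},\dots,\xi_n^{(Y_n)}$ really are iid copies of $\xi_n$ given $\omega$ (this uses the Markov/branching property applied at the successive hitting times of site $n$). A secondary point is the degenerate case $m_0(1-h_0)=1$, where $\nu_0=\infty$; one must either argue directly that GS holds or note that the Athreya--Karlin--Tanny criterion still applies with $\log\nu_0=+\infty$ under the ellipticity in \eqref{ellipticity}.
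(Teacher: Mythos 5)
Your proposal follows essentially the same route as the paper: both construct the embedded branching process in random environment that counts, generation by generation, the particles that are the first in their ancestral line to reach each successive site, both compute the quenched offspring mean to be $\tfrac{m_0 h_0}{1-m_0(1-h_0)}$ (you via a one-step fixed-point recursion, the paper via the geometric series over waiting times at the site), and both invoke Tanny's survival criterion for BPRE. One small wording issue: $Y_n$ should not be ``the number of distinct particles that ever visit site $n$'' (which would also count descendants of first-arrivals that linger at $n$, breaking both $Y_0=1$ and the i.i.d.\ branching structure) but rather the number of first-in-ancestral-line arrivals at $n$; your recursion and the statement $Y_0=1$ show you mean the latter, and this is exactly the paper's $\xi_n$.
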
\vspace{12pt}

We now consider the local and the global growth of the moments $E_\omega[\eta_n(x)]$ and $E_\omega[Z_n]$. For Theorems~\ref{prop local growth} -- \ref{thm GS und Zn}, we need the following stronger condition\renewcommand{\arraystretch}{2}\begin{equation}\label{strong ellipticity}
\begin{array}{c}
\alpha\left( \Big\{ \big((p_i)_{i\in\sN_0}, h \big) \in\Omega: p_1=1 \Big\} \right)<1,\\
\alpha\left( \Big\{ \big((p_i)_{i\in\sN_0}, h \big) \in\Omega: p_0\leq1-\delta, \, h \in [\delta,1-\delta] \Big\} \right)=1
\end{array}
\end{equation}\renewcommand{\arraystretch}{1}for some $\delta >0$. In addition, for those theorems we assume $M<\infty$.

\begin{thm}\label{prop local growth}
There exists a unique, deterministic, continuous and concave function $\beta:[0,1]\longrightarrow\sR$ such that for every $\gamma>0$ we have for \textnormal{\textsf{P}}-a.e. $\omega$
$$\lim_{n\to\infty}\ \max_{x\in n[\gamma,1]\cap\sN}\Big| \tfrac{1}{n}\log E_\omega\big[\eta_n(x)\big]-\beta(\tfrac{x}{n}) \Big|=0.$$
Additionally, it holds that $\beta(0)=\log\big(\Lambda\big)$ and $\beta(1)=\textnormal{\textsf{E}}[\log(m_0h_0)]$.
\end{thm}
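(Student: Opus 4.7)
My plan is to realize $\beta$ as the a.s.\ limit furnished by Kingman's superadditive ergodic theorem. The starting point is the explicit path representation
\begin{equation*}
E_\omega[\eta_n(x)]=\sum_{\substack{0=y_0,\ldots,y_n=x\\ y_{i+1}-y_i\in\{0,1\}}}\prod_{i=0}^{n-1} m_{y_i}\,p_\omega(y_i,y_{i+1}),
\end{equation*}
which, upon restricting to trajectories passing through site $x$ at time $n$, yields the fundamental superadditive estimate
\begin{equation*}
\log E_\omega[\eta_{n+m}(x+y)] \;\ge\; \log E_\omega[\eta_n(x)] + \log E_{\theta^x\omega}[\eta_m(y)]\qquad (\ast)
\end{equation*}
with $\theta$ denoting the shift on environments. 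The strong ellipticity \eqref{strong ellipticity} forces $m_0\ge\delta$ (since $p_0\le1-\delta$ implies $m_0\ge 1-p_0\ge\delta$) and $h_0\in[\delta,1-\delta]$ \textsf{P}-a.s., so together with $M<\infty$ one has the deterministic two-sided bound $2n\log\delta\le\log E_\omega[\eta_n(x)]\le n\log M$, covering all integrability hypotheses.

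For each rational $\gamma=p/q\in(0,1]$, set $\xi_N:=\log E_\omega[\eta_{qN}(pN)]$; by $(\ast)$ this is superadditive relative to the shift $T^p$, which is ergodic since $\textsf{P}=\alpha^{\sN_0}$ is i.i.d. Kingman's theorem yields a deterministic $\beta(\gamma)$ with $\xi_N/(qN)\to\beta(\gamma)$ \textsf{P}-a.s., and applying $(\ast)$ on a remainder block of length $<q$ extends this to convergence along all $n$; intersecting countably many null sets then makes $\tfrac{1}{n}\log E_\omega[\eta_n(\lfloor\gamma n\rfloor)]\to\beta(\gamma)$ hold simultaneously at every rational $\gamma\in(0,1]$. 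A further use of $(\ast)$ with $n_1/n\to\lambda$, $x_1/n_1\to\gamma_1$, $x_2/n_2\to\gamma_2$ produces the concavity inequality $\beta(\lambda\gamma_1+(1-\lambda)\gamma_2)\ge\lambda\beta(\gamma_1)+(1-\lambda)\beta(\gamma_2)$ on rationals. A concave function on a dense subset of $(0,1]$ that is deterministically bounded extends uniquely to a continuous concave function on $(0,1]$, and one-sided limits at the endpoints complete the extension to $[0,1]$.

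The main obstacle is upgrading this countable a.s.\ convergence to the full uniform-in-$x$ statement. The lower bound $\liminf_n\tfrac{1}{n}\log E_\omega[\eta_n(x)]\ge\beta(x/n)-\varepsilon$, uniform on $x/n\in[\gamma,1]$, is obtained by fixing a rational mesh $\gamma=r_0<\cdots<r_K=1$ with $|\beta(r_{i+1})-\beta(r_i)|<\varepsilon$, writing $x/n\in[r_i,r_{i+1}]$ as $\lambda r_i+(1-\lambda)r_{i+1}$, and applying $(\ast)$ with $(n_1,x_1)=(\lfloor\lambda n\rfloor,\lfloor\lambda n\,r_i\rfloor)$. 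For the matching upper bound I would use the exact decomposition $E_\omega[\eta_n(x)]=\sum_{z=0}^x E_\omega[\eta_{n_1}(z)]\,E_{\theta^z\omega}[\eta_{n-n_1}(x-z)]$ (take $n_1=\lfloor n/2\rfloor$), bound the sum by $(x+1)$ times its maximum, discretize the maximizer $z/n_1$ on a mesh of size $\varepsilon$, and close via the concavity inequality $\tfrac{n_1}{n}\beta(z/n_1)+\tfrac{n-n_1}{n}\beta((x-z)/(n-n_1))\le\beta(x/n)$ together with the deterministic bound $\tfrac{1}{m}\log E_\omega[\eta_m(\cdot)]\le\log M$ to handle off-grid oscillations. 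This upper direction is the genuinely delicate step because, unlike the lower bound, no single path dominates the expectation.

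The boundary values come for free. Only the strict right-movement connects $0$ to $n$ in $n$ steps, so $E_\omega[\eta_n(n)]=\prod_{y=0}^{n-1}m_y h_y$ and Birkhoff's ergodic theorem gives $\beta(1)=\textsf{E}[\log(m_0 h_0)]$. For $\beta(0)$, the path formula combined with $m_y(1-h_y)\le\Lambda$ \textsf{P}-a.s.\ yields $E_\omega[\eta_n(x)]\le M^x\binom{n}{x}\Lambda^{n-x}$, so $\limsup_{\gamma\downarrow 0}\beta(\gamma)\le\log\Lambda$. Conversely, given $\Lambda'<\Lambda$, the i.i.d.\ environment produces sites with $m_y(1-h_y)>\Lambda'$ at positive density, so for small $\gamma>0$ such a site $y^\ast\le\lfloor\gamma n\rfloor$ exists \textsf{P}-a.s., and the single-path trajectory that marches to $y^\ast$, rests there, and then proceeds to $\lfloor\gamma n\rfloor$ gives $\beta(\gamma)\ge\gamma\,\textsf{E}[\log(m_0 h_0)]+(1-\gamma)\log\Lambda'$. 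Letting $\Lambda'\uparrow\Lambda$ and $\gamma\downarrow 0$, the continuity of $\beta$ delivers $\beta(0)=\log\Lambda$.
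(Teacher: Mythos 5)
Your construction of $\beta$ on $\mathbb{Q}\cap(0,1]$ via Kingman, the concavity inequality, the continuous concave extension, the boundary values $\beta(1)=\textsf{E}[\log(m_0 h_0)]$ (via the single all-right path) and $\beta(0)=\log\Lambda$ (binomial upper bound plus a single-path lower bound through a near-supremal site), and the uniform \emph{lower} bound over a rational mesh — all of these match the paper's proof essentially step for step. The one place you diverge is the uniform \emph{upper} bound, and there you have a genuine gap.

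Your proposal is to decompose $E_\omega[\eta_n(x)]=\sum_z E_\omega[\eta_{n_1}(z)]\,E_{\theta^z\omega}[\eta_{n-n_1}(x-z)]$ at $n_1=\lfloor n/2\rfloor$, take the maximal summand, and bound each factor above by $\exp\bigl(n_1(\beta(z/n_1)+\varepsilon)\bigr)$ etc. But that bound on $E_\omega[\eta_{n_1}(z)]$ is precisely the upper bound you are in the middle of proving, now applied at time $n_1$, at an arbitrary ratio $z/n_1$ that is not a fixed rational, and also (for the second factor) in the shifted environment $\theta^z\omega$ whose ``good $N$'' threshold depends on $z$. The Kingman convergence you have at hand is only along fixed rational ratios $r/s$; it does not control $E_\omega[\eta_{n_1}(z)]$ for $z$ sweeping over all integers in $n_1[\gamma,1]$, and superadditivity alone gives no monotonicity that would let you transfer an upper bound from a nearby grid point to $z$. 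Discretizing $z/n_1$ and ``handling off-grid oscillations with $\log M$'' does not help: the crude bound $\log M$ is much larger than $\beta$ in general, and replacing $z$ by the nearest grid ratio changes the integer position, which can change $E_\omega[\eta_{n_1}(\cdot)]$ by an uncontrolled amount. As stated the argument is circular, and turning it into a genuine induction on $n$ would require controlling the thresholds $N(\theta^z\omega)$ uniformly in $z$, which is not addressed.

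The paper closes the upper bound by the opposite manoeuvre: extend \emph{forward} in time to land on a rational grid point where the Kingman limit is already known. Suppose $\tfrac{1}{n}\log E_\omega[\eta_n(y)]\ge\beta(y/n)+\varepsilon$ for some $y$ with $y/n\in[\gamma,1]$. Pick a rational $a_j\ge y/n$ with $|a_j-y/n|<\varepsilon$ and set $k'_j:=\max\{l:a'_j l\le(1+\varepsilon)n\}$, so that $a'_j k'_j\in(n,(1+\varepsilon)n]$ and $a'_j a_j k'_j\ge y$. Then superadditivity plus the ellipticity bound $E^y_\omega[\eta_{a'_jk'_j-n}(a'_ja_jk'_j)]\ge\delta_0^{a'_jk'_j-n}$ give
$$E_\omega\bigl[\eta_{a'_jk'_j}(a'_ja_jk'_j)\bigr]\ \ge\ \exp\bigl(n(\beta(\tfrac{y}{n})+\varepsilon)\bigr)\cdot\delta_0^{a'_jk'_j-n},$$
which contradicts $E_\omega[\eta_{a'_jk'_j}(a'_ja_jk'_j)]<\exp\bigl(a'_jk'_j(\beta(a_j)+\varepsilon)\bigr)$ for large $n$ once $\varepsilon$ is small, since $a'_jk'_j-n\le\varepsilon n$ and $\beta(a_j)$ is close to $\beta(y/n)$ by uniform continuity. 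This forward-extension-to-the-grid step is the idea your write-up is missing; you should replace your backward decomposition at $n/2$ with it. (One smaller point: concavity gives you continuity on the open interval $(0,1)$ but not automatically at $x=1$; as the paper remarks, continuity at $1$ needs a separate argument analogous to the one at $0$.)
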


\begin{thm}\label{existenz des limes}
We have
$$\lim_{n\to\infty}\tfrac{1}{n}\log E_\omega\big[Z_n\big] = \max_{x\in[0,1]}\beta(x)\quad\text{for }\textnormal{\textsf{P}}\text{-a.e. }\omega.$$
\end{thm}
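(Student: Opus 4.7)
The plan is to sandwich $\tfrac{1}{n}\log E_\omega[Z_n]$ between $\max_{x\in[0,1]}\beta(x)$ from below and from above, using Theorem~\ref{prop local growth} as the main input. Because particles only move to the right or stay in place, every particle alive at time $n$ lives in $\{0,1,\dots,n\}$, so
$$E_\omega[Z_n]\;=\;\sum_{x=0}^{n} E_\omega[\eta_n(x)].$$
The lower bound is essentially immediate; the real work is a separate estimate for the ``small-$x$'' contributions $x\leq\gamma n$, which the range $x\in n[\gamma,1]\cap\sN$ in Theorem~\ref{prop local growth} does not cover.

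For the lower bound, fix $\varepsilon>0$ and, using continuity of $\beta$, pick $x^\star\in(0,1]$ with $\beta(x^\star)\geq \max_{[0,1]}\beta-\varepsilon$ (if the maximum is attained only at $0$, approximate it from the right). Choose integers $x_n\in n[x^\star/2,1]\cap\sN$ with $x_n/n\to x^\star$. Theorem~\ref{prop local growth} applied with $\gamma=x^\star/2$ then gives, for \textsf{P}-a.e.\ $\omega$,
$$\tfrac{1}{n}\log E_\omega[Z_n]\;\geq\;\tfrac{1}{n}\log E_\omega[\eta_n(x_n)]\;=\;\beta(x_n/n)+o(1)\;\longrightarrow\;\beta(x^\star),$$
so $\liminf_n \tfrac{1}{n}\log E_\omega[Z_n]\geq \max_{[0,1]}\beta-\varepsilon$. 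Taking $\varepsilon\downarrow 0$ along a countable sequence yields the desired inequality on a single set of full \textsf{P}-measure.

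For the upper bound, fix $\varepsilon>0$ and $\gamma\in(0,1)$ small (to be chosen). On the range $x\in n[\gamma,1]\cap\sN$, Theorem~\ref{prop local growth} delivers $E_\omega[\eta_n(x)]\leq \exp\!\bigl(n(\beta(x/n)+\varepsilon)\bigr)\leq \exp\!\bigl(n(\max_{[0,1]}\beta+\varepsilon)\bigr)$ for large $n$, uniformly in $x$. For $x\leq \lfloor\gamma n\rfloor$ I would prove the deterministic path-counting bound
$$E_\omega[\eta_n(x)]\;=\;\sum_{\pi:\,0\to x\text{ in }n\text{ steps}}\;\prod_{j=0}^{n-1}m_{\pi_j}\,p_\omega(\pi_j,\pi_{j+1})\;\leq\;\binom{n}{x}\,\Lambda^{n-x}\,M^x,$$
by noting that any such Markov path consists of exactly $x$ right-jumps (each contributing a factor $m_{\pi_j}h_{\pi_j}\leq M$) and $n-x$ stays (each contributing $m_{\pi_j}(1-h_{\pi_j})\leq \Lambda$), with $\binom{n}{x}$ possible interleavings. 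For $x\leq \gamma n$ this implies
$$\tfrac{1}{n}\log E_\omega[\eta_n(x)]\;\leq\;H(\gamma)+(1-\gamma)\log\Lambda+\gamma\log M,$$
where $H$ denotes the binary entropy; the right-hand side tends to $\log\Lambda=\beta(0)\leq\max_{[0,1]}\beta$ as $\gamma\downarrow0$, so for $\gamma$ small enough it is $\leq \max_{[0,1]}\beta+\varepsilon$. Combining both ranges, $E_\omega[Z_n]\leq (n+1)\exp\!\bigl(n(\max_{[0,1]}\beta+\varepsilon)\bigr)$, whence $\limsup_n \tfrac{1}{n}\log E_\omega[Z_n]\leq \max_{[0,1]}\beta+\varepsilon$, and $\varepsilon\downarrow0$ finishes the proof.

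The one genuine obstacle is the small-$x$ region: Theorem~\ref{prop local growth} is silent there, and the naive bound $E_\omega[\eta_n(x)]\leq E_\omega[Z_n]\leq M^n$ is in general strictly larger than $e^{n\max_{[0,1]}\beta}$. The path-counting estimate above, which exploits the right-or-stay structure of the walk together with the standing assumptions $M<\infty$ and (via strong ellipticity) $\Lambda>0$, is exactly what closes the gap; everything else is a matter of continuity of $\beta$ and elementary exponential-scale bookkeeping.
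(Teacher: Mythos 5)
Your proof is correct and takes essentially the same route as the paper: the upper bound's binomial/entropy estimate $E_\omega[\eta_n(x)]\leq\binom{n}{x}\Lambda^{n-x}M^x$ for $x\leq\gamma n$ is exactly the content of Lemma~\ref{lem1}, and the decomposition of $E_\omega[Z_n]$ into the ranges $x<\gamma n$ and $x\geq\gamma n$ mirrors the paper precisely. The only (minor) difference is in the lower bound, where the paper works along the subsequence $nx_0'$ with $x_0$ rational and bridges intermediate times via ellipticity, while you invoke the uniform convergence in Theorem~\ref{prop local growth} directly for a sequence $x_n/n\to x^\star$ — a small streamlining that leans on the full strength of that theorem rather than the raw subadditive-limit definition of $\beta$.
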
\vspace{12pt}

The next theorem shows that GS is equivalent to exponential growth of the moments $ E_\omega[Z_n]$:

\begin{thm}\label{GS und EW}
The following assertions are equivalent:
 \begin{enumerate}
 \item
 $\lim\limits_{n\to\infty} \tfrac{1}{n}\log E_\omega\big[Z_n\big] > 0$ holds for $\textnormal{\textsf{P}}\text{-a.e. }\omega.$
 \item
 There is GS for \textnormal{\textsf{P}}-a.e. $\omega$.
 \end{enumerate}
\end{thm}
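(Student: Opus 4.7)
The plan is to combine Theorem~\ref{existenz des limes} with Theorems~\ref{LS} and~\ref{GS}, reducing the equivalence to an analytic identification of $\max\beta$ via a generating function. By Theorem~\ref{existenz des limes}, assertion (i) is equivalent to $\max_{x\in[0,1]}\beta(x)>0$. If $\Lambda>1$, Theorem~\ref{LS} yields LS (hence GS), and simultaneously $\beta(0)=\log\Lambda>0$ gives $\max\beta>0$, so both assertions trivially hold. It remains to treat $\Lambda\leq 1$, where Theorem~\ref{GS} states that (ii) is equivalent to $\textsf{E}[\log\bar m_0]>0$ with $\bar m_y:=m_yh_y/(1-m_y(1-h_y))$. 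Thus it suffices to show $\max\beta>0\Leftrightarrow\textsf{E}[\log\bar m_0]>0$.

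The key step is a generating function computation. Setting $a_y:=m_y(1-h_y)$ and expanding $E_\omega[\eta_n(x)]$ as a sum over monotone paths from $0$ to $x$, grouped by the sojourn times $\tau_y\geq 1$ (for $y<x$) and $\tau_x\geq 0$, gives the identity
\begin{equation*}
\sum_{n\geq 0}E_\omega[\eta_n(x)]\,z^n=\prod_{y<x}\frac{m_yh_yz}{1-a_yz}\cdot\frac{1}{1-a_xz}.
\end{equation*}
Summing over $x$, $F_\omega(z):=\sum_nE_\omega[Z_n]z^n=\sum_{x\geq 0}\bigl(\prod_{y<x}b_y(z)\bigr)(1-a_xz)^{-1}$ with $b_y(z):=m_yh_yz/(1-a_yz)$; its radius of convergence $R_\omega$ satisfies $\max\beta=-\log R_\omega$ by Cauchy--Hadamard and Theorem~\ref{existenz des limes}. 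For each fixed $z\in(0,1)$, the i.i.d.\ strong law of large numbers gives $\frac{1}{x}\log\prod_{y<x}b_y(z)\to\textsf{E}[\log b_0(z)]$ a.s., so a root-test argument identifies $R_\omega$ as the unique $z>0$ solving $\textsf{E}[\log b_0(z)]=0$. Since $z\mapsto\textsf{E}[\log b_0(z)]$ is strictly increasing in $z$ and equals $\textsf{E}[\log\bar m_0]$ at $z=1$, we conclude $R_\omega<1\Leftrightarrow\textsf{E}[\log\bar m_0]>0$, as desired.

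The main obstacle will be the borderline regime $\Lambda=1$, in which $a_y$ can touch $1$ on a set of positive $\alpha$-measure so that $\textsf{E}[\log b_0(z)]$ may blow up as $z\uparrow 1$. Here one truncates $\bar m_y$ to a version bounded away from the singularity and invokes monotone convergence, exploiting $M<\infty$ and the stronger ellipticity $h_y\in[\delta,1-\delta]$ from \eqref{strong ellipticity} to control the tail of $1-a_y$. A secondary subtlety is the boundary case $\textsf{E}[\log\bar m_0]=0$, which yields $R_\omega=1$ (centred random walk fluctuations of $\sum\log\bar m_y$), hence $\max\beta=0$ and (i) fails, in agreement with (ii) failing under the strict inequality of Theorem~\ref{GS}.
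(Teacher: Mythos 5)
Your approach is correct in essence and genuinely different from the paper's. Both directions of the paper's proof are probabilistic: for (ii)$\Rightarrow$(i) the paper introduces a truncated embedded branching process $(\xi^T_n)$ with $\xi^T_n \le Z_n+\cdots+Z_{nT}$ and applies the SLLN together with Theorem~\ref{existenz des limes}; for (i)$\Rightarrow$(ii) it decomposes $Z_n=\sum_{i}\sum_{\sigma\in G_i} H^\sigma_{n-|\sigma|}$ along ancestral lines, shows $E_\omega[Z_n]\le\sum_{i\le n}E_\omega[|G_i|]$, and identifies $(|G_n|)$ with the embedded BPRE $(\xi_n)$ to invoke Tanny. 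You instead treat Theorems~\ref{GS} and~\ref{existenz des limes} as black boxes and reduce the whole equivalence to a single analytic identity: $\max\beta=-\log R_\omega$ where $R_\omega$ is the radius of convergence of $F_\omega(z)=\sum_n E_\omega[Z_n]z^n$, and you compute $F_\omega$ in closed form via the linear recursion for $E_\omega[\eta_n(x)]$ (your product formula checks out: with $u_x(z)=\sum_n E_\omega[\eta_n(x)]z^n$ the recursion gives $u_x(z)=b_{x-1}(z)(1-a_{x-1}z)(1-a_xz)^{-1}u_{x-1}(z)$, telescoping to the stated product). The root test then ties $R_\omega<1$ to $\textsf{E}[\log\bar m_0]>0$. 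This sidesteps both the $\xi^T$-truncation and the $G_i/H^\sigma$ bookkeeping entirely, and has the bonus of exhibiting a clean generating function for the population profile. The price is more analytic care, which you partially flag: $R_\omega\le 1/\Lambda$ always (since $\esssup a_0=\Lambda$, a.s.\ some $a_x z>1$ once $z>1/\Lambda$), so ``the unique root of $\textsf{E}[\log b_0(z)]=0$'' may not exist, and $R_\omega=\min(z^*,1/\Lambda)$ rather than simply $z^*$. This is harmless for the conclusion because $1\le 1/\Lambda$ when $\Lambda\le 1$: strict monotonicity and monotone convergence as $z\uparrow 1$ still give $R_\omega<1\Leftrightarrow\textsf{E}[\log\bar m_0]>0$, and for $\textsf{E}[\log\bar m_0]=0$ the strict increase already forces $R_\omega\ge 1$ (no need for the centred-random-walk fluctuation argument you mention). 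Two points worth making explicit in a full write-up: the SLLN needs $\textsf{E}|\log b_0(z)|<\infty$, which holds for $z<1/\Lambda$ because $\delta^2\le m_0h_0\le M$ and $1-a_0z\ge 1-\Lambda z>0$ by the ellipticity and $M<\infty$; and the determinism of $R_\omega$ (needed to equate it with $e^{-\max\beta}$) is supplied precisely by Theorem~\ref{existenz des limes}.
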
\vspace{12pt}

In the following theorem we consider the growth of the population $Z_n$ on the event of survival:

\begin{thm}\label{thm GS und Zn}
If there is GS we have for \textnormal{\textsf{P}}-a.e. $\omega$
$$\lim_{n\to\infty}\tfrac{1}{n}\log Z_n =\max_{x\in[0,1]}\beta(x)>0\quad\text{P}_{\omega}\text{-a.s.}\quad\text{on}\quad\{Z_n\not\to 0\}.$$ 
\end{thm}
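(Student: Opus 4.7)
Let $\lambda := \max_{x\in[0,1]}\beta(x)$. Under the GS hypothesis, Theorem~\ref{GS und EW} gives $\lambda>0$. For the upper bound, fix $\epsilon>0$. By Theorem~\ref{existenz des limes}, $E_\omega[Z_n]\leq e^{n(\lambda+\epsilon)}$ for all large $n$, \textsf{P}-a.s., so Markov's inequality yields $P_\omega(Z_n\geq e^{n(\lambda+2\epsilon)})\leq e^{-n\epsilon}$. This is summable, and Borel--Cantelli gives $\limsup_{n\to\infty} n^{-1}\log Z_n \leq \lambda$ after letting $\epsilon$ run through a countable sequence.

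The lower bound is the heart of the matter. Fix $\epsilon>0$; since $\beta$ is continuous and $\beta(0)=\log\Lambda\leq 0<\lambda$, the maximum is attained in $(0,1]$, so I pick $x^\ast\in(0,1]$ with $\beta(x^\ast)>\lambda-\epsilon$ and set $x_n:=\lfloor nx^\ast\rfloor$. Theorem~\ref{prop local growth} then gives $E_\omega[\eta_n(x_n)]\geq e^{n(\lambda-2\epsilon)}$ for all large $n$, \textsf{P}-a.s. The key step is to prove a Paley--Zygmund estimate of the form
$$P_\omega\bigl(\eta_n(x_n)\geq \tfrac{1}{2}E_\omega[\eta_n(x_n)]\bigr)\geq c(\omega)>0\quad\text{for large }n,$$
by bounding the second moment $E_\omega[\eta_n(x_n)^2]$ via a many-to-two decomposition indexed by the generation and location at which two lineages reaching $x_n$ split. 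The resulting bound involves the second factorial moments of the $\mu_y$, which are not assumed finite; the standard fix is to truncate each $\mu_y$ at a large level $K$, run the argument for the truncated process (whose growth exponent tends to $\lambda$ as $K\to\infty$ by Theorem~\ref{prop local growth}), and then take $K\to\infty$. I expect this second-moment/truncation step to be the main technical obstacle.

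Finally I would upgrade this positive-probability statement to an almost-sure one on survival using the branching structure. Given $\omega$, the subtrees rooted at distinct particles alive at a common time are conditionally independent, and the subtree from a particle at position $y$ is itself a BRW in the shifted environment $(\omega_y,\omega_{y+1},\ldots)$, which is equidistributed with $\omega$ under \textsf{P}. On $\{Z_n\not\to 0\}$ the number of ancestors with infinite lines of descent tends to infinity, so the second Borel--Cantelli lemma applied to independent Paley--Zygmund trials across such subtrees forces $Z_n\geq e^{n(\lambda-3\epsilon)}$ along a sufficiently dense subsequence, $P_\omega$-a.s.\ on survival. A short interpolation using the ellipticity~\eqref{strong ellipticity} (which guarantees that only a controlled fraction of $Z$ is lost across bounded time spans) extends the bound to all large $n$, and letting $\epsilon\downarrow 0$ through a countable sequence completes the proof.
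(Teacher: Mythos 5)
Your upper bound is fine and in fact more direct than the paper's: Markov plus Borel--Cantelli applied to $Z_n$ itself, using Theorem~\ref{existenz des limes}, already gives $\limsup_n \tfrac1n\log Z_n\le\lambda$ $P_\omega$-a.s.\ (the paper instead decomposes $Z_n$ spatially and bounds $\eta_n(y)$ for $y\ge\gamma n$ and $y<\gamma n$ separately, which is equivalent but longer).

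For the lower bound you take a genuinely different route from the paper. You propose a second-moment/Paley--Zygmund estimate for $\eta_n(\lfloor nx^\ast\rfloor)$, truncating the offspring distributions to control second factorial moments, and then upgrading to an a.s.\ statement on survival via independent subtrees. The paper instead never touches second moments: it embeds a one-dimensional branching process in random environment $(\psi_n)$ inside the BRWRE by counting particles at the sparse space-time points $(nrN_0, nsN_0)$, shows this process is supercritical by the integrability estimate around $\beta(r/s)$, and then invokes Tanny's Theorem~5.5 and Corollary~6.3 to get a.s.\ exponential growth of $\psi_n$ on the event that $(\psi_n)$ survives. Since $\psi_n\le\eta_{nsN_0}(nrN_0)\le Z_{nsN_0}$, this yields the needed lower bound with positive probability, with no moment hypotheses beyond $M<\infty$. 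What Tanny buys here is exactly the two things you identify as obstacles: it converts the first-moment condition into both positive survival probability \emph{and} a.s.\ exponential growth given survival, so the Paley--Zygmund step and the truncation argument (in particular the claim that the truncated rate function $\beta^{(K)}$ converges to $\beta$, which you do not justify) are bypassed entirely.

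Your final upgrade step is also under-argued. You assert that subtrees rooted at distinct particles give independent trials with uniformly positive success probability, because the shifted environment $\theta^y\omega$ is equidistributed with $\omega$ under $\mathsf P$. That is an annealed statement; quenched, the constant $c(\theta^y\omega)$ varies with $y$ and need not be bounded below along the positions actually visited. The paper handles this by an ergodicity argument (Corollary~\ref{cor1}): the indicator of $\{P_{\theta^x\omega}(\cdot)>\nu\}$ has positive density along $x$ by the ergodic theorem, producing an increasing sequence $(x_l)$ of ``good'' locations, and then it couples this with Tanny's a.s.\ exponential growth of the front process $|G_l|$ to make the second Borel--Cantelli step rigorous. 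You would need some version of this to close your argument. The paper's Part~2(ii) also contains a non-trivial interpolation step using a large-deviation bound for the binomial to pass from the subsequence $j+nsN_0$ to all $n$; your one-line appeal to ellipticity is the right idea but would need the same quantitative estimate~\eqref{large deviation} to avoid losing an exponential factor.
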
\vspace{12pt}

As already announced above we now analyse the case of constant drift parameter, i.e.\ $\textsf{P}(h_0=h)=1$ for some $h\in(0,1]$. As it is easy to see from Theorem~\ref{LS} in this case we have LS iff
$$h<h_{LS}:= \begin{cases} 1-\frac{1}{M} & \quad \text{if }M\in(1,\infty] \\ 0&\quad\text{if }M\in(0,1]\ \ .\end{cases}$$
To analyse the dependence of GS on $h$ we define
$$\varphi(h):=\textnormal{\textsf{E}}\left[\log\left(\frac{m_0h}{1-m_0(1-h)}\right)\right].$$

\begin{thm}\label{phase transitions}
Suppose $h\geq h_{LS}$.
\begin{enumerate}
\item
If $M\leq1$ then we have $\varphi(h)\leq0$ for all $h\in(0,1]$ and thus there is a.s.\ no GS.
\item
Assume $M>1$.
\begin{enumerate}
\item
If $\varphi(h_{LS})\geq0$ and $\varphi(1)\leq0$ then there is a unique $h_{GS}\in[h_{LS},1]$ with $\varphi(h_{GS})=0$. In this case we have a.s.\ GS for $h\in(0,h_{GS})$ and a.s.\ no GS for $h\in[h_{GS},1]$.
\item
If $\varphi(h_{LS})<0$ then $\varphi(h)<0$ for all $h\in[h_{LS},1]$. Thus we have a.s.\ GS for $h\in(0,h_{LS})$ and a.s.\ no GS for $h\in[h_{LS},1]$. In this case we define $ h_{GS}:= h_{LS}$.
\item
If $\varphi(1)>0$ then $\varphi(h)>0$ for all $h\in[h_{LS},1]$. Thus there is a.s.\ GS for all $h\in(0,1]$. In this case we define $h_{GS}:=\infty$.
\end{enumerate}
Hence, we have a unique $h_{GS}\in[h_{LS},1]\cup\{\infty\}$ such that there is a.s.\ GS for $h< h_{GS}$ and a.s.\ no GS for $h\geq h_{GS}$.
\end{enumerate}
\end{thm}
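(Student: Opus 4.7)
The plan is to combine Theorems~\ref{LS} and~\ref{GS}, which in the constant-$h$ setting read ``LS iff $h<h_{LS}$'' (since $\Lambda=M(1-h)$) and ``GS iff $\varphi(h)>0$ for $h\ge h_{LS}$'', with a careful study of the sign of the scalar function $\varphi$ on $[h_{LS},1]$. The task reduces to showing that $\{h\in[h_{LS},1]:\varphi(h)>0\}$ is an interval of the form $[h_{LS},h_{GS})$, from which the three sub-cases follow.

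For part~(i), when $M\le 1$, observe that $\frac{mh}{1-m(1-h)}\le 1$ is equivalent to $m\le 1$; this holds \textsf{P}-a.s., so the integrand defining $\varphi(h)$ is pointwise non-positive and $\varphi(h)\le 0$ for every $h\in(0,1]$.

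For part~(ii), $M>1$, the heart of the argument is the following uniqueness claim: on $[h_{LS},1]$ the function $\varphi$ has at most one zero. A direct computation gives
\[
\varphi'(h)=\frac{1}{h}\bigl(1-\textsf{E}[Y_h]\bigr),\qquad Y_h:=\frac{m_0 h}{1-m_0(1-h)},
\]
while Jensen's inequality yields $\varphi(h)=\textsf{E}[\log Y_h]\le\log\textsf{E}[Y_h]$, with equality iff $Y_h$ is a.s.\ constant; since $m\mapsto Y_h(m)$ is strictly increasing, this occurs only when $m_0$ is deterministic. In the non-degenerate regime, at any zero $h^{\ast}$ of $\varphi$ one therefore has $\textsf{E}[Y_{h^{\ast}}]>1$ and hence $\varphi'(h^{\ast})<0$ strictly; a second zero would force $\varphi$ to rise back to $0$ with derivative $\ge 0$ there, contradicting this Jensen bound. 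The degenerate case $m_0\equiv m>1$ is handled directly: $\varphi(h)=\log\frac{mh}{1-m(1-h)}$ is strictly decreasing with $\varphi(h_{LS})=+\infty$ and $\varphi(1)=\log m>0$, placing us in sub-case~(c).

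With uniqueness established, the sub-cases follow from continuity and the intermediate value theorem. Sub-case~(a) is immediate: IVT produces a zero in $[h_{LS},1]$, which is unique by the claim. In~(b), if $\varphi$ were $\ge 0$ somewhere in $(h_{LS},1]$, continuity together with $\varphi(h_{LS})<0$ would yield a point where $\varphi$ crosses from below to $0$ (so $\varphi'\ge 0$ there), contradicting the Jensen bound; sub-case~(c) is symmetric. Combining with the LS regime $h<h_{LS}$ gives the threshold $h_{GS}$ with the asserted properties. The main obstacle I anticipate is exactly the uniqueness step: $\varphi$ is not globally monotone (indeed $\varphi'(1)=1-\textsf{E}[m_0]$ can be positive), so a direct monotonicity argument fails and the Jensen-at-zeros trick is essential. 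Care is also needed in the degenerate deterministic case and at the endpoint $h_{LS}$, where $\varphi$ may equal $+\infty$ (this happens precisely when $m_0$ attains its essential supremum with positive probability), but in both regimes the argument goes through.
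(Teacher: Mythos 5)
Your approach is essentially the paper's: reduce to a sign analysis of the scalar function $\varphi$ on $[h_{LS},1]$, compute $\varphi'$, and use Jensen's inequality (strict, since $m\mapsto m h/(1-m(1-h))$ is strictly increasing and $m_0$ non-degenerate) to show $\varphi'<0$ at every zero of $\varphi$, which forces the sign pattern asserted in (a)--(c). The deterministic case $m_0\equiv M$ and part (i) are handled the same way (your extra monotonicity observation in the deterministic case is correct but unnecessary: the integrand is already pointwise positive).

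There is one genuine gap, which you flag but do not close: the derivative identity $\varphi'(h)=\textsf{E}\bigl[\tfrac1h-\tfrac{m_0}{1-m_0(1-h)}\bigr]$ is only justified on the open interval $(h_{LS},1]$, because $\textsf{E}\bigl[\tfrac{m_0}{1-m_0(1-h_{LS})}\bigr]$ may be $+\infty$ (this can happen even when $\varphi(h_{LS})$ itself is finite, e.g.\ when $m_0$ has a density near $M$). Consequently the Jensen-at-zeros argument does not directly apply when the zero sits at $h_{LS}$, which is precisely the delicate instance of sub-case (a) with $\varphi(h_{LS})=0$. Your ``at most one zero on $[h_{LS},1]$'' claim therefore only rules out extra zeros in $(h_{LS},1]$ and leaves open whether $\varphi$ could touch $0$ at $h_{LS}$ and then come back up. The paper closes this by showing directly that $\varphi$ is strictly decreasing just to the right of $h_{LS}$: Jensen at $h_{LS}$ gives $\textsf{E}\bigl[\tfrac{m_0}{1-m_0(1-h_{LS})}\bigr]>\tfrac1{h_{LS}}$, and since $h\mapsto\tfrac{m_0}{1-m_0(1-h)}$ is decreasing, monotone convergence yields $\varphi'(h)<0$ for $h\in(h_{LS},h_{LS}+\varepsilon)$, whence $\varphi<0$ there. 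You should supply this (or an equivalent) argument rather than asserting that ``the argument goes through''; your stated worry ($\varphi(h_{LS})=+\infty$) is actually the harmless case since then $h_{LS}$ is not a zero.
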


\section{Remarks}
The following remarks apply to the case of constant drift.
\begin{rmks}\label{rmk}
\begin{enumerate}
\item
Since $\varphi(1)=\textsf{E}[\log m_0]$, our results can be seen as an extension of the well-known condition for possible survival of branching processes in a random environment (see Theorem~5.5 and Corollary~6.3 in \cite{tanny}, recalling that we assume condition~\eqref{ellipticity}). In fact, our proofs rely on this result.
\item
If $M<\infty$ and $\varphi(h_{LS})\in(0,\infty]$ then due to the continuity of $\varphi$ there exists $z>0$ such that
there is a.s.\ GS but a.s.\ no LS for every $h\in[h_{LS},h_{LS}+z)$. In particular, this is the case if $\textsf{P}(m_0=M)>0$, since then $\varphi(h_{LS})=\infty$.
\item
We provide an example for a setting in which the condition of Theorem~\ref{phase transitions}~\textit{(ii)(b)} holds. In this case there is a.s.\ LS for $h\in(0,h_{LS})$ and a.s.\ no GS for $h\in[h_{LS},1]$ for some $h_{LS}\in(0,1)$. (See Section \ref{examples}.)
\end{enumerate}
\end{rmks}

\begin{rmk}\label{connection}
The expected global population size $E_\omega[Z_n]$ corresponds to $d_n^{\text{I}}(0,F)$ in the notation of \cite{greven2}. In Theorem 2 I. they describe the limit
$$\lim_{n\to\infty}\tfrac{1}{n}\log E_\omega[Z_n]=\lim_{n\to\infty}\tfrac{1}{n}\log d_n^{\text{I}}(0,F) =:\lambda(h)$$
as a function of the drift $h$ by an implicit formula.\\
To see this correspondence let $(S_n)_{n\in\sN_0}$ be a random walk with (non-random) transition probabilities $(p_h(x,y))_{x,y\in\sN_0}$ starting in $0$ where the transition probabilities are defined by
$$p_h(x,y):=\begin{cases}h& y=x+1\\1-h& y=x\\0& \text{otherwise}\end{cases}$$
and let $E_h$ be the associated expectation. We denote the local times of $(S_n)_{n\in\sN_0}$ by $l_n(x)$, that is
$$l_n(x):=| \{ 0\leq i\leq n: S_i = x\}|\quad\text{for }x\geq0,n\geq0.$$
For $x=0$ we now have
$$E_\omega[\eta_n(0)]=(1-h)^n\cdot m_0(\omega)^n=E_h\left[\prod_{i=0}^{n-1}m_{S_i}(\omega)\cdot\mathds{1}_{\{S_n=0\}}\right].$$
For $x\geq1$ we have
$$E_\omega[\eta_n(x)]=h\cdot m_{x-1}(\omega)\cdot E_{\omega}\big[\eta_{n-1}(x-1)\big]+(1-h)\cdot m_{x}(\omega)\cdot E_{\omega}\big[\eta_{n-1}(x)\big]$$
which yields
$$E_\omega[\eta_n(x)]=E_h\left[\prod_{i=0}^{n-1}m_{S_i}(\omega)\cdot\mathds{1}_{\{S_n=x\}}\right]$$
for all $x\geq1$ by induction.
Finally we get
$$E_\omega[Z_n]=\sum_{x=0}^\infty E_{\omega}[\eta_n(x)]=E_h\left[ \prod_{i=0}^{n-1} m_{S_i}(\omega) \right]=E_h\left[ \prod_{x=0}^{n-1} m_x(\omega)^{l_n(x)} \right].$$
Since we can extend the environment $\omega=(\omega_x)_{x\in\sN_0}$ to an i.i.d.\ environment $(\omega_x)_{x\in\sZ}$ and since $(\omega_x)_{x \in \sZ}$ and $(\omega_{-x})_{x \in \sZ}$ have the same distribution with respect to \textsf{P}, formula (1.8) and Theorem 1 in \cite{greven2} show that there exists a deterministic $c \in \sR$ such that
$$\lim_{n \to \infty} \tfrac{1}{n} \log E_\omega[Z_n] = c\quad\text{for }\textsf{P}\text{-a.e. }\omega.$$
In our notation this limit coincides with $\max_{x\in[0,1]}\beta(x)$.\\
The connection between the two models enables us to characterize the critical drift parameter at which the function $h\mapsto\lambda(h)$ in \cite{greven2} changes its sign using an easier criterion, see Theorem \ref{phase transitions}.
\end{rmk}
\section{Proofs}
\begin{proof}[\bf Proof of Theorem \ref{LS}]
First we observe that the descendants of a particle at location $x$ that stay at $x$ form a Galton-Watson process with mean offspring $m_x(1-h_x)$. Given $\omega$, we therefore have
$$P_\omega^x\big(\eta_n(x)\to0\big)<1\quad\quad\Leftrightarrow\quad\quad m_x(\omega)(1-h_x(\omega))>1.\vspace{12pt}$$
Now assume $\Lambda>1$. Thus there is some $\lambda > 1$ such that
$$ \textsf{P}(m_0(1-h_0)\geq\lambda)>\varepsilon>0$$
for some $\varepsilon>0$ and using the Borel-Cantelli lemma we obtain that \textsf{P}-a.s.\ for infinitely many locations $x$ we have
$$m_x(1-h_x)>1.$$
Let $x_0=x_0(\omega)$ be a location satisfying $m_{x_0}(1-h_{x_0})>1$.\\
For \textsf{P}-a.e. $\omega$ we see
\bea
&&P_\omega(\eta_{x_0}(x_0)\geq1)\\
&\geq&\big(1-\mu_0\big(\{0\}\big)\big)h_0\cdot\big(1-\mu_1\big(\{0\}\big)\big)h_1\cdot\ldots\cdot\big(1-\mu_{x_0-1}\big(\{0\}\big)\big)h_{x_0-1}\\
&>&0
\eea
whereas the second inequality uses condition \eqref{ellipticity}.\\
We obtain for \textsf{P}-a.e. $\omega$
\bea
&&P_\omega(\eta_n({x_0})\to\infty)\\
&\geq& P_\omega(\eta_{x_0}({x_0})\geq1)\cdot P_\omega^{x_0}(\eta_n({x_0})\to\infty)\\
&>&0
\eea
and thus LS.\vspace{12pt}\\
Now assume $\Lambda\leq1$. As mentioned above, for every $x\in\sN_0$ and \textsf{P}-a.e.\ $\omega$ the descendants of a particle at 
location $x$ that stay at $x$ form a subcritical or critical Galton-Watson process. Thus for a given $\omega$ we have
$$\eta_n(0)\to0\quad P_\omega\text{-a.s.}$$
and the total number of particles that move from $0$ to $1$ is therefore $P_\omega$-a.s. finite. Inductively we conclude for every $x\in\sN_0$ that the total number of particles that reach location $x$ from $x-1$ is finite. By assumption each of those particles starts a subcritical or critical Galton-Watson process at location $x$ which dies out $P_\omega$-a.s.. This implies 
$$P_{\omega}(\eta_n(x)\to0)=1\quad\forall\ x\in\sN_0$$
which completes the proof.
\end{proof}
\begin{proof}[\bf Proof of Theorem \ref{GS}]
Since $\Lambda\leq1$ by assumption, there is \textsf{P}-a.s.\ no LS according to Theorem \ref{LS}. In other words we have for all $x\in\sN_0$
$$P_\omega(\eta_n(x)\to0)=1\quad\text{for }\textsf{P}\text{-a.e. }\omega.$$
We now define a branching process in random environment $(\xi_n)_{n\in\sN_0}$ that is embedded in the considered BRWRE. After starting with one particle at $0$ we freeze all particles that reach $1$ and keep those particles frozen until all existing particles have reached $1$. This will happen a.s.\ after a finite time because the number of particles at $0$ constitutes a subcritical or critical Galton-Watson process that dies out with probability $1$. We now denote the total number of particles frozen in $1$ by $\xi_1$. Then we release all particles, let them reproduce and move according to the BRWRE and freeze all particles that hit $2$. Let $\xi_2$ be the total number of particles frozen at $2$. We repeat this procedure and with $\xi_0:=1$ we obtain the process $(\xi_n)_{n\in\sN_0}$ which is a branching process in an i.i.d. environment.\\
Another way to construct $(\xi_n)_{n\in\sN_0}$ is to think of ancestral lines. Each particle has a unique ancestral line leading back to the first particle starting from the origin. Then, $\xi_k$ is the total number of particles which are the first particles that reach 
position $k$ among the particles in their particular ancestral lines.\\
We observe that 
GS of $(Z_n)_{n\in\sN_0}$ is equivalent to survival of $(\xi_n)_{n\in\sN_0}$.\\
Due to Theorem~5.5 and Corollary~6.3 in \cite{tanny} (taking into account condition \eqref{ellipticity}) the process $(\xi_n)_{n\in\sN_0}$ survives with positive probability for \textsf{P}-a.e.\ environment $\omega$ iff
$$\int\, \log\big( E_\omega[\xi_1]\big)\, \textsf{P}(d\omega)>0.$$
Computing the expectation $E_\omega[\xi_1]$ now completes our proof. First we define $\xi_1^{(k)}$ as the number of particles which move from position $0$ to $1$ at time $k$. Using this notation we may write
$$\xi_1=\sum_{k=0}^\infty\xi_1^{(k)}$$
and obtain
$$E_\omega[\xi_1]=\sum_{k=0}^\infty E_\omega\big[\xi_1^{(k)}\big]\, .$$
To calculate $E_\omega\big[\xi_1^{(k)}\big]$ we observe that (with respect to $P_\omega$) the expected number of particles at 
position $0$ at time $k$ equals $\big(m_0(\omega)\cdot(1-h_0(\omega))\big)^k$. Each of those particles contributes $m_0(\omega)\cdot h_0(\omega)$ to $E_\omega\big[\xi_1^{(k)}\big]$. This yields
\begin{eqnarray}\label{expactation xi}
E_\omega[\xi_1]&=&\sum_{k=0}^\infty \big(m_0(\omega)\cdot(1-h_0(\omega))\big)^k \cdot m_0(\omega)\cdot h_0(\omega)\nonumber\\
               &=&\frac{m_0(\omega)\cdot h_0(\omega)}{1-m_0(\omega)\cdot(1-h_0(\omega))}
\end{eqnarray}
which is defined as $\infty$ if $m_0(\omega)\cdot(1-h_0(\omega))=1$.
\end{proof}
\begin{rmk}
Alternatively, equation \eqref{expactation xi} can be obtained using generating functions. The crucial observation is that the generating function $f_x(s):=E_\omega[s^{\xi_{x+1}} | \xi_x=1]$ is a solution of the equation
$$f_x(s)=g_x\big((1-h_x)f_x(s)+h_xs\big)$$
where $g_x(s):=\sum_{k=0}^\infty \mu_x\big(\{k\}\big)s^k$. Then, $E_\omega[\xi_1] = f_0^\prime(1)$, leading to \eqref{expactation xi} .
\end{rmk}

\begin{proof}[\bf Proof of Theorem \ref{prop local growth}]
Following the ideas of \cite{popov} we introduce the function $\beta$ to investigate the local growth rates.\\
\newline
(i) First we show that $\beta$ can be defined as a concave function on $(0,1]\cap \sQ$ such that
\begin{equation}\label{beta on Q}
\lim_{n\to\infty} \tfrac{1}{sn} \log E_\omega\big[\eta_{sn}(rn)\big]=\beta\left(\tfrac{r}{s}\right)
\end{equation}
holds for all $r,s\in\sN$ with $r\leq s$ and for \textsf{P}-a.e.\ $\omega$.\\
To see this fix $r,s\in\sN$ with $r\leq s$. We define
$$S_{m,n}(\omega):=\tfrac{1}{s} \log E_\omega^{rm}\big[\eta_{s(n-m)}(rn)\big]$$
for $0\leq m\leq n$ which is integrable due to \eqref{strong ellipticity} and $M<\infty$. Using this definition, we have
\begin{equation}\label{shift invarianz}
S_{m+1,n+1}(\omega)=S_{m,n}\circ \Theta(\omega)
\end{equation}
where $\Theta(\omega):=\theta^r(\omega)$ with $\theta$ denoting the shift operator as usual, i.e.\ $(\theta\,\omega)_i = \omega_{i+1}$. Furthermore we have
\begin{equation}\label{superadd}
S_{0,n}(\omega)\geq S_{0,m}(\omega)+S_{m,n}(\omega)
\end{equation}
since
\begin{equation*}
E_\omega^0\big[ \eta_{sn}( rn )\big]\geq E_\omega^{0}\big[ \eta_{sm}( rm )\big]\cdot E_\omega^{rm}\big[ \eta_{s(n-m)}( rn )\big].
\end{equation*}
With the properties \eqref{shift invarianz} and \eqref{superadd} we are able to apply the subadditive ergodic theorem to $(S_{m,n})$ and we obtain that
$$\lim_{n\to\infty} \tfrac{1}{n} S_{0,n}(\omega)=\lim_{n\to\infty} \tfrac{1}{sn} \log E_\omega\big[\eta_{sn}(rn)\big]=:\beta\left(\tfrac{r}{s}\right)$$
exists for \textsf{P}-a.e.\ $\omega$. Clearly, the limit only depends on $\tfrac{r}{s}$. Whereas it is \textsf{P}-a.s.\ constant since \textsf{P} is i.i.d..\\
\newline
(ii) We now show that $\beta$ is concave on $(0,1]\cap \sQ$. Fix $a,b,t\in(0,1]\cap\sQ$ with $t\neq1$ and let $s:=a'\!\cdot b'\!\cdot t'$ be the product of the denominators of the reduced fractions of $a,b,t$. Due to \eqref{superadd} we have
\begin{eqnarray}\label{concav}
&&\tfrac{1}{sn}\log E_\omega\Big[ \eta_{sn} \big( s( ta+(1-t)b)n\big) \Big]\nonumber\\
&\geq& t\tfrac{1}{stn}\log E_\omega\Big[ \eta_{stn} \big( stan\big) \Big]\nonumber\\
&&\quad +\ (1-t)\tfrac{1}{s(1-t)n}\log E_\omega^{stan} \Big[ \eta_{s(1-t)n}\big(  s( ta+(1-t)b)n \big) \Big]\nonumber\\
&=& t\tfrac{1}{stn}\log E_\omega\Big[ \eta_{stn} \big( stan\big) \Big]\nonumber\\
&&\quad +\ (1-t)\tfrac{1}{s(1-t)n}\log E_{\theta^{stan}\omega}\Big[ \eta_{s(1-t)n} \big(  s(1-t)bn \big) \Big].
\end{eqnarray}
We observe that for all $n\in\sN_0$
$$E_{\theta^{stan}\omega}\Big[ \eta_{s(1-t)n} \big(  s(1-t)bn \big) \Big]\overset{d}{=}E_\omega\Big[ \eta_{s(1-t)n} \big(  s(1-t)bn \big) \Big].$$
Due to \eqref{beta on Q} and since $\beta$ is \textsf{P}-a.s.\ constant, this implies
$$(1-t)\tfrac{1}{s(1-t)n}\log E_{\theta^{stan}\omega}\Big[ \eta_{s(1-t)n} \big(  s(1-t)bn \big) 
\Big]\xrightarrow[n\to\infty]
{\textnormal{\textsf{ }}}
(1-t)\beta(b)$$
in probability. Therefore there exists a subsequence such that we have \textsf{P}-a.s.\ convergence in \eqref{concav} and this yields
$$\beta( ta+(1-t)b)\geq t\beta(a)+(1-t)\beta(b).$$
We observe that $\beta$ is bounded with $2\log\delta+\log(1-\delta)\leq\beta(x)\leq\log M$ and thus it can be uniquely extended to a continuous and concave function $\beta:(0,1)\longrightarrow\sR$.\\
\newline
(iii) We now investigate the behaviour of $\beta$ for $x \downarrow 0$ and show that 
$$\lim_{x \downarrow 0} \beta(x) = \log(\Lambda).$$
Fix $\varepsilon>0$ and $a \in \mathbb{Q} \cap (0,\varepsilon]$. Let $a'$ be the denominator of the reduced fraction of $a$. For \textnormal{\textsf{P}}-a.e. $\omega$ there exists $y=y(\omega)$ with
$$m_{y(\omega)} (1-h_{y(\omega)}) > \Lambda - \varepsilon.$$
With
$$k:= \max \{ l \in \sN: l \leq (1-\varepsilon)a'n \}$$
we get for large $n$ such that $k \geq y(\omega)$ 
\bea
E_{\omega}\big[\eta_{a'n}(a'an)\big]&\geq&E_{\omega}\big[\eta_{k}(y(\omega))\big] \cdot E^{y(\omega)}_{\omega}\big[\eta_{a'n-k}(a'an)\big] \\
&\geq&\delta_0^{y(\omega)} \cdot (\Lambda - \varepsilon)^{k-y(\omega)} \cdot \delta_0^{a'n - k} \quad \text{for }\textsf{P}\text{-a.e. }\omega
\eea
whereas $\delta_0:=\delta^2\cdot(1-\delta)$. Taking $n \to \infty$ and $\varepsilon \to 0$ we conclude
$$\liminf_{x \downarrow 0} \beta(x) \geq \log(\Lambda).$$
To get the other inequality we notice that for $n_1,n_2\in\sN$ we have
\begin{equation}\label{ineq1}
E_{\omega} \big[\eta_{n_1 \cdot n_2}(n_2)\big] \leq \tbinom{n_1 \cdot n_2}{n_2} \cdot \Lambda^{(n_1-1)\cdot n_2} \cdot M^{n_2} \quad \text{for }\textsf{P}\text{-a.e. }\omega. 
\end{equation}
Since
$$ \tfrac{1}{n_1 \cdot n_2} \log \tbinom{n_1 \cdot n_2 }{ n_2} \xrightarrow[n_2 \to \infty]{} \tfrac{n_1-1}{n_1} \log \big( \tfrac{n_1}{n_1-1} \big) + \tfrac{1}{n_1} \log(n_1) \xrightarrow[n_1 \to \infty]{} 0,$$
\eqref{ineq1} yields for \textsf{P}-a.e.\ $\omega$
\bea
\tfrac{1}{n_1 \cdot n_2} \log E_{\omega} \big[ \eta_{n_1 \cdot n_2} (n_2)\big] &\leq& (o(n_2)+o(n_1))+ \tfrac{n_1-1}{n_1} \log(\Lambda) + \tfrac{1}{n_1} \log(M)\\
&& \xrightarrow[n_2 \to \infty]{} \tfrac{n_1-1}{n_1} \log(\Lambda) + o(n_1).
\eea
This implies
$$\limsup_{n \to \infty} \beta\left(\tfrac{1}{n} \right) \leq \log(\Lambda)  $$
and due to the continuity of $\beta$ on $(0,1)$ we conclude
$$ \limsup_{x \downarrow 0} \beta(x) \leq \log(\Lambda).$$
\newline
(iv) Since $(\eta_n(n))_{n\in\sN_0}$ is a branching process in an i.i.d.\ environment satisfying $E_{\omega}[\eta_1(1)]=~m_0 h_0$, we have $$\beta(1)=\textnormal{\textsf{E}}\big[\log(m_0h_0)\big].$$
The continuity of $\beta$ in 1 can be shown with similar arguments as in part (iii).
\\
\newline
(v) Fix $\gamma>0$ and $\varepsilon>0$. We now show that for \textsf{P}-a.e.\ $\omega$
\begin{equation}\label{prop2}
\liminf_{n\to\infty} \min_{x\in n[\gamma,1]\cap\sN}\Big( \tfrac{1}{n}\log E_\omega[\eta_n(x)]-\beta(\tfrac{x}{n})\Big) \geq0.
\end{equation}
To see this we observe that there is a finite set $\{a_1,\ldots,a_l\}\subset(0,1)\cap\sQ$ satisfying the following condition:
$$\forall\ b\in [\gamma,1]\ \exists\ i,j\in\{1,\ldots,l\}:\ |b-a_i|<\varepsilon\,,\ a_i\leq b\text{ and } |b-a_j|<\varepsilon\,,\ a_j\geq b\, . $$
Let $a'_i$ be the denominator of the reduced fraction of $a_i$. We define
$$k_i:=\max\{ l\in\sN:a'_i l\leq(1-\varepsilon)n \}.$$
By definition of $k_i$, for large $n$ it holds that
\begin{equation}\label{prop1}
(1-2\varepsilon)n<(1-\varepsilon)n-a'_i<a'_ik_i\leq(1-\varepsilon)n.
\end{equation}
Furthermore, for large $n$ and for all $i\in\{1,\ldots,l\}$ we have
\begin{equation}\label{prop1aa}
\tfrac{1}{a'_ik_i} \log E_\omega\big[ \eta_{a'_ik_i}(a'_ia_ik_i) \big]\geq\beta(a_i)-\varepsilon
\end{equation}
for \textsf{P}-a.e.\ $\omega$ due to \eqref{beta on Q}.\\
Now let $y\in n[\gamma,1]\cap\sN$. Then, there is $a_i\leq\tfrac{y}{n}$ with $|\tfrac{y}{n}-a_i|<\varepsilon$ and we have
\begin{equation}\label{prop1a}
a'_ia_ik_i\leq(1-\varepsilon)na_i\leq(1-\varepsilon)y\leq y.
\end{equation}
If $\beta(a_i)-\varepsilon\geq0$ due to \eqref{prop1}, \eqref{prop1aa} and \eqref{prop1a} we have
\bea
&&E_\omega\big[\eta_n(y)\big]\\
&\geq& E_\omega\big[\eta_{a'_ik_i}(a'_ia_ik_i)\big]\cdot E_\omega^{a'_ia_ik_i}\big[\eta_{n-a'_ik_i}(y)\big]\\
&\geq&\exp\big( a'_ik_i\cdot(\beta(a_i)-\varepsilon) \big)\cdot \delta_0^{n-a'_ik_i}\\
&=&\exp\big( \underbrace{a'_ik_i}_{\geq (1-2\varepsilon)n}\cdot\:(\beta(a_i)-\varepsilon) - \underbrace{(n-a'_ik_i)}_{\leq2\varepsilon n}\cdot\log(\delta_0^{-1})\big)\\
&\geq&\exp\Big(n \big( (1-2\varepsilon) \cdot(\beta(a_i)-\varepsilon) - 2\varepsilon \cdot\log(\delta_0^{-1}) \big)\Big)
\eea
for \textsf{P}-a.e.\ $\omega$ and for all large $n$, again with $\delta_0:=\delta^2\!\cdot\!(1-\delta)$. This yields for \textsf{P}-a.e.~$\omega$
\begin{eqnarray}\label{eqn100}
&&\tfrac{1}{n} \log E_\omega\big[\eta_n(y)\big]\nonumber\\
&\geq&(1-2\varepsilon)\cdot(\beta(a_i)-\varepsilon)-2\varepsilon\cdot\log(\delta_0^{-1}).
\end{eqnarray}
If $\beta(a_i)-\varepsilon<0$, we conclude in the same way that for \textsf{P}-a.e.\ $\omega$
\begin{eqnarray}\label{eqn101}
&&E_\omega\big[\eta_n(y)\big]\nonumber\\
&\geq&\exp\Big(n \big( (1-\varepsilon) \cdot(\beta(a_i)-\varepsilon) - 2\varepsilon \cdot\log(\delta_0^{-1}) \big)\Big).
\end{eqnarray}
Since $|a_i-\tfrac{y}{n}|<\varepsilon$ and since $\beta$ is uniformly continuous on $[\gamma,1]$, \eqref{eqn100} and \eqref{eqn101} imply \eqref{prop2} as $n\to\infty$ and $\varepsilon\to0$.
\\
\newline
(vi) To complete the proof we now have to show that for \textsf{P}-a.e.\ $\omega$
\begin{equation}\label{prop2a}
\limsup_{n\to\infty} \max_{x\in n[\gamma,1]\cap\sN}\Big( \tfrac{1}{n}\log E_\omega\big[\eta_n(x)\big]-\beta(\tfrac{x}{n}) \Big) \leq0.
\end{equation}
So we assume that \eqref{prop2a} does not hold and thus for infinitely many $n\in\sN$ there exists $y\in n[\gamma,1]\cap\sN$ such that
\begin{equation}\label{eqn102}
\tfrac{1}{n}\log E_\omega\big[\eta_{n}(y)\big]\geq\beta(\tfrac{y}{n})+\varepsilon
\end{equation}
holds with positive probability. As in (v), associated with $y$ there exists $a_j\geq\tfrac{y}{n}$ with $|\tfrac{y}{n}-a_j|<\varepsilon$.
We define
$$k'_j:=\max\{ l\in\sN:a'_jl\leq(1+\varepsilon)n \}.$$
Then \eqref{beta on Q} implies
\begin{equation}\label{prop3}
E_\omega\big[\eta_{a'_jk'_j}(a'_ja_jk'_j)\big]<\exp\big(a'_jk'_j\cdot(\beta(a_j)+\varepsilon)\big)
\end{equation}
for \textsf{P}-a.e.\ $\omega$ and for all large $n$. At the same time due to \eqref{eqn102} we have with positive probability
\bea
&&E_\omega\big[ \eta_{a'_jk'_j}(a'_ja_jk'_j)\big]\\
&\geq& E_\omega\big[\eta_{n}(y)\big]\cdot E_\omega^{y}\big[\eta_{a'_jk'_j-n}(a'_ja_jk'_j)\big]\\
&\geq&\exp\big(n(\beta(\tfrac{y}{n})+\varepsilon)\big)\cdot \delta_0^{a'_jk'_j-n}
\eea
since $a'_jk'_j-n>0$ and $a'_ja_jk'_j\geq (n + \varepsilon n-a'_j)a_j\geq na_j\geq y$ for large $n$. This yields a contradiction to \eqref{prop3} and hence completes the proof of the theorem.
\end{proof}

\begin{proof}[\bf Proof of Theorem \ref{existenz des limes}]
For any $\varepsilon > 0$ there exists $x_0 \in \mathbb{Q} \cap (0,1]$ such that
$$\beta(x_0) \geq \max_{x \in [0,1]} \beta(x) - \varepsilon.$$
Let $x_0' \in \sN$ be the denominator of the reduced fraction of $x_0$. Then we have for \textsf{P}-a.e.\ $\omega$
\bea
\liminf_{n \to \infty} \tfrac{1}{n x_0'} \log E_{\omega}\big[Z_{n x_0'}\big]&\geq& \liminf_{n \to \infty} \tfrac{1}{n  x_0'} \log E_{\omega}\big[\eta_{n x_0'}(n x_0'  \cdot x_0)\big]\\
&=& \beta(x_0) \geq \max_{x \in [0,1]} \beta(x) - \varepsilon
\eea
and because of the ellipticity condition \eqref{strong ellipticity}
$$ E_{\omega} \big[Z_{n x_0' + r}\big] \geq \delta_0^r \cdot E_{\omega} \big[Z_{n x_0'}\big]$$
for $r \in \{0,1,\ldots,x_0'-1\}$ and for \textsf{P}-a.e.\ $\omega$. We conclude for $\varepsilon \to 0$ that for \textsf{P}-a.e.~$\omega$
\begin{equation}\label{liminf1}
\liminf_{n \to \infty} \tfrac{1}{n} \log E_{\omega} \big[Z_n\big] \geq \max_{x \in [0,1]} \beta(x).
\end{equation}
To get the other inequality we first state the following
\begin{lem} \label{lem1}
For $\varepsilon > 0$ there is $\gamma > 0$ such that for all $n\in\sN$ we have
$$\tfrac{1}{n} \log E_{\omega}\big[\eta_n(y)\big] \leq \log (\Lambda + \varepsilon)\quad\textnormal{for \textsf{P}-a.e. }\omega$$
for all $y\in n[0,\gamma]\cap\sN_0$.
\end{lem}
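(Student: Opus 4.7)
The plan is to bound $E_\omega[\eta_n(y)]$ directly by expanding over ancestral paths and then to choose $\gamma$ so small that the entropic contribution from having a few ``move'' steps is negligible.

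First I would observe that, as in Remark~\ref{connection} (but now for random $h$), the mean occupation at $(n,y)$ admits the Feynman--Kac type representation
\begin{equation*}
E_\omega[\eta_n(y)] = \sum_{\pi} \prod_{i=0}^{n-1} m_{\pi_i}(\omega)\, p_\omega(\pi_i,\pi_{i+1}),
\end{equation*}
where the sum runs over the nearest-neighbour right-or-stay paths $\pi=(\pi_0,\ldots,\pi_n)$ with $\pi_0=0$ and $\pi_n=y$. Such a path is determined by the sojourn times $(t_0,\ldots,t_y)$ with $t_k\geq 0$ and $\sum_k t_k = n-y$, and its weight factors as
\begin{equation*}
\prod_{k=0}^{y-1}\bigl(m_k(1-h_k)\bigr)^{t_k}\, m_k h_k \;\cdot\; \bigl(m_y(1-h_y)\bigr)^{t_y}.
\end{equation*}

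Next I would invoke the \textsf{P}-a.s.\ bounds $m_x(1-h_x)\leq\Lambda$ and $m_x h_x\leq m_x\leq M$, valid simultaneously at all locations since these are countable intersections of \textsf{P}-a.s.\ events. Substituting gives
\begin{equation*}
E_\omega[\eta_n(y)] \leq \binom{n}{y}\, \Lambda^{n-y}\, M^{y}\quad\text{for \textsf{P}-a.e.\ }\omega.
\end{equation*}
Taking logarithms and using the standard entropy bound $\binom{n}{y}\leq \exp(n H(y/n))$ with $H(\rho):=-\rho\log\rho-(1-\rho)\log(1-\rho)$ yields, with $\rho:=y/n\in[0,\gamma]$,
\begin{equation*}
\tfrac{1}{n}\log E_\omega[\eta_n(y)] \leq H(\rho) + (1-\rho)\log\Lambda + \rho\log M.
\end{equation*}

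Finally, since $H(\rho)\to 0$ and $\rho\log M\to 0$ as $\rho\downarrow 0$, and since $\Lambda>0$ by the ellipticity condition \eqref{strong ellipticity} (which forces $m_0\geq\delta$ and $1-h_0\geq\delta$, hence $\Lambda\geq\delta^2$), the right-hand side is a continuous function of $\rho$ that equals $\log\Lambda$ at $\rho=0$. Thus there exists $\gamma=\gamma(\varepsilon)>0$ such that the bound is at most $\log(\Lambda+\varepsilon)$ uniformly in $\rho\in[0,\gamma]$, i.e.\ uniformly in $n$ and $y\in n[0,\gamma]\cap\sN_0$. This is exactly the desired estimate. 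The only delicate point is establishing the path representation for random $h$; once this factorisation is in place, everything else is a routine comparison with $\Lambda$ and $M$ followed by an entropy estimate.
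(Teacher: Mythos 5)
Your proof is correct and takes essentially the same approach as the paper: both establish the pointwise bound $E_\omega[\eta_n(y)]\leq\binom{n}{y}\Lambda^{n-y}M^y$ and then observe that the binomial factor contributes negligibly when $y/n$ is small. The only difference is cosmetic: you spell out the Feynman--Kac path expansion that gives the bound and use the entropy estimate $\binom{n}{y}\leq\exp(nH(y/n))$, whereas the paper states the bound directly and controls $\binom{n}{y}$ via $\binom{n}{\lfloor\gamma n\rfloor}$ for $\gamma<1/2$.
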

\renewcommand{\qedsymbol}{$\square$}
\begin{proof}[Proof of Lemma \ref{lem1}]
For $\tfrac{1}{2} > \gamma > 0$ and $y < \gamma n$ we have
$$E_{\omega}[\eta_n(y)] \leq \tbinom{n}{y} \cdot\Lambda^{n-y} \cdot M^y\quad\textnormal{for \textsf{P}-a.e. }\omega.$$
Since
$$ \tfrac{1}{n} \log \tbinom{n}{y} \leq \tfrac{1}{n} \log \tbinom{n}{\left\lfloor \gamma n \right\rfloor} 
\to 0$$
for $\gamma \to 0$ uniformly in $n$, we get for \textsf{P}-a.e.\ $\omega$
$$\tfrac{1}{n} \log E_{\omega} [\eta_n(y)] \leq o(\gamma) + \tfrac{n-y}{n} \log (\Lambda) + \tfrac{y}{n} \log(M) \leq \log(\Lambda + \varepsilon)$$
for $\gamma >0$ small enough.
\end{proof}
For an arbitrary $\varepsilon > 0$ we now choose $\gamma>0$ as in Lemma \ref{lem1}. Then, by Theorem~\ref{prop local growth} and Lemma~\ref{lem1} we get
\bea
&&\limsup_{n\to\infty}\tfrac{1}{n} \log E_{\omega} [Z_n]\\
&=& \limsup_{n\to\infty}\tfrac{1}{n} \log E_{\omega} \left( \sum_{y=0}^{\lfloor\gamma n\rfloor-1} \eta_n(y) + \sum_{y=\lfloor\gamma n\rfloor}^{n} \eta_n(y)\right)\\
&\leq&\limsup_{n\to\infty}\tfrac{1}{n} \log\left(\gamma n\! \cdot\! \big(\Lambda + \varepsilon\big)^n + n\! \cdot\! \exp\Big(n\!\cdot\!\big(\max_{x \in [0,1]} \beta(x) + o(n)\big)\Big)\right)\\
&\leq&\max_{x \in [0,1]} \beta(x) + \varepsilon
\eea
for \textsf{P}-a.e.\ $\omega$ since $\beta(0)=\log(\Lambda)$. For $\varepsilon \to 0$ this yields for \textsf{P}-a.e.\ $\omega$
$$\limsup_{n \to \infty} \tfrac{1}{n} \log E_{\omega} [Z_n] \leq \max_{x \in [0,1]} \beta(x)\, ,$$
which, together with \eqref{liminf1}, proves the claim. 
\renewcommand{\qedsymbol}{$\blacksquare$}
\end{proof}

\begin{proof}[\bf Proof of Theorem \ref{GS und EW}]
We start by proving that \textit{(ii)} implies \textit{(i)}.\\
First assume that there is \textsf{P}-a.s.\ LS. As shown in the proof of Theorem \ref{LS} for \textsf{P}-a.e.\ $\omega$ there is a location $x$ such that the descendants of a particle at $x$ that stay at $x$ form a supercritical Galton-Watson process. Let $x=x(\omega)$ be such a location, i.e.\ $m_x(1-h_x)>1$. Then we have for \textsf{P}-a.e.\ $\omega$ and for $n\geq x$
\bea
E_\omega[Z_n]&\geq&E_\omega[\eta_n(x)]\\
 &\geq&\big(1-\mu_0\big(\{0\}\big)\big)h_0\cdot\ldots\cdot\big(1-\mu_{x-1}\big(\{0\}\big)\big)h_{x-1} \cdot\big(m_x(1-h_x)\big)^{n-x}\\
 &\geq&(\delta^{2x} \cdot \big(m_x(1-h_x)\big)^{n-x}
\eea
where we used condition \eqref{ellipticity} for the last inequality. Due to Theorem~\ref{existenz des limes} we obtain for \textsf{P}-a.e.\ $\omega$
\bea
\lim_{n\to\infty} \tfrac{1}{n}\log E_\omega[Z_n]&\geq&\limsup_{n\to\infty}\tfrac{1}{n}\log\big(\delta^{2x} \cdot (m_x(1-h_x))^{n-x}\big)\\
 &=&\log\left(m_x(1-h_x)\right)\\
 &>&0.
\eea
Now let us assume that there is \textsf{P}-a.s.\ no LS, which is according to Theorem~\ref{LS} equivalent to $\Lambda\leq1$. Again, we use the process $(\xi_n)_{n\in\sN_0}$ defined in the proof of Theorem~\ref{GS}. \\
Since there is GS for \textsf{P}-a.e.\ $\omega$, the process $(\xi_n)_{n\in\sN_0}$ has a positive probability of survival for \textsf{P}-a.e. $\omega$. Thus we have
\begin{equation}\label{gleichung tanny}
\int \log \big( E_\omega[\xi_1]\big)\textsf{P}(d\omega)>0
\end{equation}
by Theorem 5.5 in \cite{tanny}. For $T\in\sN$ we now introduce a slightly modified embedded branching process $(\xi_n^T)_{n\in\sN_0}$. For $k\in\sN$ we define $\xi_k^T$ as the total number of all particles that move from position $k-1$ to $k$ within $T$ time units after they were released at position $k-1$. The left over particles are no longer considered. With $\xi^T_0:=1$ we observe that $(\xi_n^T)_{n\in\sN_0}$ is a branching process in an i.i.d.\ environment. By the monotone convergence theorem and \eqref{gleichung tanny} there exists some $T$ such that
\begin{equation}\label{EW groesser 0}
\int \log\left( E_\omega\big[\xi_1^T\big] \right)\textsf{P}(d\omega)>0.
\end{equation}
By construction of $(\xi_n^T)_{n\in\sN_0}$ we obtain
\begin{equation}\label{abschaetzung xi^T und Zn}
\xi_n^T\ \leq\ Z_n+Z_{n+1}+\ldots+Z_{nT}.
\end{equation}
Using the strong law of large numbers and taking into account that $\omega$ is an i.i.d. sequence, we have
\begin{eqnarray}\label{LLN}
&&\lim_{n\to\infty}\tfrac{1}{n}\log E_\omega\big[\xi_n^T\big] \nonumber \\
&=&\lim_{n\to\infty}\tfrac{1}{n}\log \prod_{i=0}^n E_{\theta^n\omega} \big[\xi_1^T\big] \nonumber \\
&=&\lim_{n\to\infty}\tfrac{1}{n}\sum_{i=0}^n\log E_{\theta^n\omega} \big[\xi_1^T\big] \nonumber \\
&=&\int \log \left( E_\omega\big[\xi_1^T\big] \right) \textsf{P}(d\omega)\quad\textnormal{for \textsf{P}-a.e. }\omega.
\end{eqnarray}
Here $\theta$ again denotes the shift operator as usual, i.e.\ $(\theta\,\omega)_i = \omega_{i+1}$. Together with \eqref{EW groesser 0} and \eqref{abschaetzung xi^T und Zn} this yields for \textsf{P}-a.e.\ $\omega$
\begin{equation}\label{gleichung summe der Z_n}
\liminf_{n\to\infty}\tfrac{1}{n}\log E_\omega\big[Z_n+Z_{n+1}+\ldots+Z_{n T}\big]>0.
\end{equation}
Now we conclude using Theorem~\ref{existenz des limes} that for \textsf{P}-a.e.\ $\omega$
$$\max_{x\in[0,1]}\beta(x) = \lim_{n\to\infty}\tfrac{1}{n}\log E_\omega[Z_n]> 0$$
because otherwise there would be a contradiction to \eqref{gleichung summe der Z_n}. This shows that \textit{(ii)} implies \textit{(i)}.\vspace{12pt}\\
To show that \textit{(i)} implies \textit{(ii)} we first notice that \textit{(ii)} obviously holds if there is LS for \textsf{P}-a.e.\ $\omega$.
Therefore we may assume $\Lambda~\leq~1$ for the rest of the proof.\\
Now label every particle of the entire branching process and let $\Gamma$ denote the set of all produced particles. Write $ \sigma \prec \tau $ for two particles $\sigma \neq \tau$ if $\sigma$ is an ancestor of $\tau$ and denote by $|\sigma|$ the generation in which the particle $\sigma$ is produced. Furthermore, for every $\sigma \in \Gamma$ let $X_{\sigma}$ be the random location of the particle $\sigma$. Using these notations we define
\begin{equation} \label{mengeg}
G_i := \{ \tau \in \Gamma:\, X_{\tau} = i,\, X_{\sigma} < i\text{ for all }\sigma \in \Gamma,\, \sigma \prec \tau \}
\end{equation}
for every $i \in\sN_0$. Therefore $G_i$ is for $i\neq0$ the set of all the particles $\tau$ that move from position $i-1$ to position $i$ and hence the particles in $G_i$ are the first particles at position $i$ in their particular ancestral lines. We observe that the process $(|G_n|)_{n\in\sN_0}$ coincides with $(\xi_n)_{n\in\sN_0}$. Further, define for every $\sigma \in \Gamma$ and $n \in\sN_0$
$$H_n^{\sigma}:= \left| \{ \tau \in \Gamma: \, \sigma \preceq \tau, \, |\tau| = n, \, X_{\tau} = X_{\sigma} \} \right|$$
as the number of descendants of the particle $\sigma$ in generation $n$ which are still at the same location as the particle $\sigma$. This enables us to decompose $Z_n$ in the following way:
\begin{equation}\label{zer1}
Z_n = \sum_{i=1}^{n} \sum_{\sigma \in G_i} H_{n - |\sigma|}^{\sigma} 
\end{equation}
Since by assumption there is no LS, we have for \textsf{P}-a.e.\ $\omega$
\begin{equation}\label{zer2}
E_{\omega} [H_n^{\sigma} \, | \, \sigma \in \Gamma,\, X_{\sigma} = i] \leq 1 
\end{equation}
because for any existing particle $\sigma$ its progeny which stays at the location of $\sigma$ forms a Galton-Watson process which eventually dies out. By \eqref{zer1} and \eqref{zer2} we conclude that for \textsf{P}-a.e.\ $\omega$ we have
$$E_{\omega}[Z_n] \leq \sum_{i=1}^{n} E_{\omega} [|G_i|].$$
Therefore due to $(i)$ we get
$$\limsup_{n \to \infty} \tfrac{1}{n} \log E_{\omega} [|G_n|] > 0  \quad\text{ for }\textsf{P}\text{-a.e. }\omega.$$
Since $(|G_n|)_{n\in\sN_0}$ coincides with the branching process in random environment $(\xi_n)_{n\in\sN_0}$, we obtain
$$\int\, \log\big( E_\omega[\xi_1]\big)\, \textsf{P}(d\omega) = \lim_{n \to \infty} \tfrac{1}{n} \log E_{\omega} [|G_n|] > 0 \quad\text{ for }\textsf{P}\text{-a.e. }\omega$$
as in \eqref{LLN}. But then again, we have GS for \textsf{P}-a.e.\ $\omega$ since $(\xi_n)_{n\in\sN_0}$ survives with positive probability for \textsf{P}-a.e.\ $\omega$.
\end{proof}

\begin{proof}[\bf Proof of Theorem \ref{thm GS und Zn}]
In this proof we use the expression ``a.s.'' in the sense of ``$P_\omega$-a.s.\ for \textsf{P}-a.e.\ $\omega$''.
\vspace{12pt}
\\
{\bf Part 1.} In the first part of the proof we show in three steps that we have a.s.
\begin{equation} \label{eq28}
\limsup_{n \to \infty} \tfrac{1}{n} \log Z_n \leq \max_{x \in [0,1]} \beta(x).
\end{equation}
(i) To obtain \eqref{eq28} we start by showing that for all $\gamma>0$ we have a.s.
\begin{equation}\label{local growth Zn}\limsup_{n\to\infty}\ \max_{x\in n[\gamma,1]\cap\sN}\left( \tfrac{1}{n}\log \eta_n(x) -\beta(\tfrac{x}{n}) \right)\leq 0.\end{equation}
To see this fix $\gamma>0$ and $\varepsilon>0$.\\
Then, by Theorem \ref{prop local growth} for \textsf{P}-a.e.\ $\omega$ there exists $N=N(\omega, \gamma, \varepsilon)$ such that for all $n\geq N$ and for all $y\in n[\gamma,1]\cap\sN$ we have
$$E_\omega[\eta_n(y)]\leq\exp\big(n\cdot(\beta(\tfrac{y}{n})+\varepsilon)\big).$$
Thus, for \textsf{P}-a.e. $\omega$ we obtain for large $n$ and for all $y\in n[\gamma,1]\cap\sN$
$$P_\omega\Big( \eta_n(y)\geq\exp\big( n\cdot(\beta(\tfrac{y}{n})+2\varepsilon) \big) \Big)\leq \frac{ E_\omega[\eta_n(y)] }{ \exp(n\cdot(\beta(\tfrac{y}{n})+2\varepsilon)) }=\exp(-\varepsilon n).$$
Using the Borel-Cantelli lemma and taking into account that $\big|n[\gamma,1]\cap\sN\big|\leq n$ this yields that a.s.\ we have
$$\limsup_{n \to \infty} \max_{x\in n[\gamma,1]\cap\sN}\big(\tfrac{1}{n}\log\eta_n(y)-\beta(\tfrac{y}{n})\big)<2\varepsilon.$$
Since $\varepsilon$ is arbitrarily small, this proves \eqref{local growth Zn}.
\vspace{12pt}
\\
(ii) Secondly, we show that for every $\varepsilon>0$ there exists $\gamma=\gamma(\varepsilon)>0$ such that a.s.\ we have
\begin{equation} \label{eq5.62}
\limsup_{n\to\infty}\max_{x\in n[0,\gamma]\cap\sN}\big( \tfrac{1}{n}\log\eta_n(x)-\beta(0)-\varepsilon  \big)\leq 0.
\end{equation}
To see this we observe that according to Lemma \ref{lem1} for every $\varepsilon>0$ there is $\gamma = \gamma(\varepsilon)>0$ such that
\bea
\tfrac{1}{n} \log E_{\omega}\big[\eta_n(y)\big] \leq \log (\Lambda + \varepsilon) \stackrel{(\Lambda>1)}{\leq} \log(\Lambda) + \varepsilon = \beta(0) + \varepsilon
\eea
for \textsf{P}-a.e.\ $\omega$ and for $0 \leq y \leq \gamma n$. Therefore the same argument as in (i) yields \eqref{eq5.62}.
\vspace{12pt}
\\
(iii) We now combine (i) and (ii) to obtain \eqref{eq28}. For an arbitrary $\varepsilon>0$ choose $\gamma >0$ as in (ii). Then \eqref{local growth Zn} and \eqref{eq5.62} imply that a.s.\ we have
\bea
&& \limsup_{n \to \infty} \tfrac{1}{n} \log Z_n\\
&=&\limsup_{n \to \infty} \tfrac{1}{n} \log \left( \sum_{y=0}^{\lfloor\gamma n\rfloor-1} \eta_n(y) + \sum_{y=\lfloor\gamma n\rfloor}^{n} \eta_n(y)\right)\\
&\leq&\limsup_{n \to \infty} \tfrac{1}{n} \log\left(\gamma n\! \cdot\! \exp\big(n\!\cdot\!(\beta(0) + \varepsilon)\big) + n\! \cdot\! \exp \Big(n\! \cdot\! \big(\max_{x \in [0,1]} \beta(x) + o(n)\big)\Big)\right)\\
&\leq& \max_{x \in [0,1]} \beta(x) + \varepsilon.
\eea
For $\varepsilon \to 0$ this implies \eqref{eq28} and thus the first part of the proof is complete.
\vspace{12pt}
\\
{\bf Part 2.} In the second part of the proof we show that
\begin{equation}\label{eq27}
\left.P_{\omega} \left( \liminf_{n \to \infty} \tfrac{1}{n} \log Z_n \geq \max_{x\in[0,1]}\beta(x)\ \right|\ Z_n\not\to 0  \right)=1 \quad \text{for }  \textnormal{\textsf{P}-a.e. } \omega.
\end{equation}
We start by stating the following
\begin{lem} \label{lem2}
For all $\varepsilon>0$ and $r,s\in\sN$ with $r\leq s$ and $\beta(\tfrac{r}{s})-\varepsilon>0$ there exists $N_0 \in \sN$ such that for \textnormal{\textsf{P}}-a.e. $\omega$ we have
$$P_\omega\left(\liminf_{n\to\infty} \tfrac{1}{nsN_0}\log\eta_{nsN_0}(nrN_0)\geq\beta(\tfrac{r}{s})-\varepsilon\right)>0.$$
\end{lem}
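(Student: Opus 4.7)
The strategy is to dominate $\eta_{nsN_0}(nrN_0)$ by a carefully chosen embedded branching process in random environment (BPRE) that is supercritical in the sense of Tanny \cite{tanny}. The BPRE will be obtained by taking snapshots of the original BRWRE along the space-time grid $(nsN_0,nrN_0)_{n \in \sN_0}$ for a large enough parameter $N_0$, exploiting the fact that particles never move to the left.

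Concretely, I would set $\zeta_0 := 1$ and, given $\zeta_n$ particles at position $nrN_0$ at time $nsN_0$, define $\zeta_{n+1}$ to be the number of their descendants that happen to be at position $(n+1)rN_0$ at time $(n+1)sN_0$. Conditionally on $\zeta_n=1$, the offspring distribution of a generation-$n$ particle is the $P_{\theta^{nrN_0}\omega}$-distribution of $\eta_{sN_0}(rN_0)$. Because the blocks $[nrN_0,(n+1)rN_0]$ are disjoint and $\omega$ is i.i.d., $(\zeta_n)_{n\in\sN_0}$ is genuinely a BPRE with quenched mean offspring
$$\bar m_n(\omega) := E_{\theta^{nrN_0}\omega}\bigl[\eta_{sN_0}(rN_0)\bigr].$$
Moreover $\eta_{nsN_0}(nrN_0) \geq \zeta_n$ by construction, so any lower bound on $\tfrac{1}{n}\log \zeta_n$ passes through.

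Next I would choose $N_0$ large enough that the BPRE is supercritical. Theorem \ref{prop local growth} (more precisely its part (i)) gives $\tfrac{1}{sN_0}\log E_\omega[\eta_{sN_0}(rN_0)] \to \beta(r/s)$ \textsf{P}-a.s., and the two-sided ellipticity \eqref{strong ellipticity} together with $m_0 \leq M$ yields the deterministic bounds $\delta_0^{sN_0} \leq E_\omega[\eta_{sN_0}(rN_0)] \leq M^{sN_0}$. Hence dominated convergence upgrades this to an $L^1(\textsf{P})$ statement, so for $N_0$ sufficiently large
$$\textsf{E}\bigl[\log \bar m_0(\omega)\bigr] \geq sN_0\bigl(\beta(r/s)-\tfrac{\varepsilon}{2}\bigr) > 0.$$
Applying standard BPRE theory (Theorem 5.5 in \cite{tanny}) to $(\zeta_n)$, which lives in a bounded log-moment regime since $\bar m_n \leq M^{sN_0}$, we obtain that $(\zeta_n)$ survives with positive $P_\omega$-probability for \textsf{P}-a.e.\ $\omega$, and on the survival event the strong law of large numbers (applied to $\tfrac{1}{n}\sum_{i<n}\log \bar m_i$) together with the Athreya--Karlin/Tanny identification of the growth rate gives
$$\tfrac{1}{n}\log\zeta_n \xrightarrow[n \to \infty]{} \textsf{E}[\log \bar m_0] \geq sN_0\bigl(\beta(r/s)-\tfrac{\varepsilon}{2}\bigr)\quad P_\omega\text{-a.s.}$$
Dividing by $sN_0$ and using $\eta_{nsN_0}(nrN_0) \geq \zeta_n$ yields $\liminf_n \tfrac{1}{nsN_0}\log \eta_{nsN_0}(nrN_0) \geq \beta(r/s)-\tfrac{\varepsilon}{2} > \beta(r/s)-\varepsilon$ on an event of positive $P_\omega$-probability, which is the claim.

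The main obstacle is promoting the almost-sure statement of Theorem \ref{prop local growth} to the $L^1$ statement $\textsf{E}[\log\bar m_0] \geq sN_0(\beta(r/s)-\varepsilon/2)$ that identifies the embedded BPRE as supercritical; this is exactly where the two-sided ellipticity bound \eqref{strong ellipticity} (forcing $h \leq 1-\delta$) is essential, since without it the lower bound $E_\omega[\eta_{sN_0}(rN_0)] \geq \delta_0^{sN_0}$ used for dominated convergence would fail. A secondary point is making sure $(\zeta_n)$ is truly an i.i.d.-environment BPRE; this uses in a critical way that particles never move to the left, so the disjoint environment blocks $[nrN_0,(n+1)rN_0]$ are never revisited by later generations.
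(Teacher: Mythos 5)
Your construction is essentially identical to the paper's: you build the same embedded branching process in random environment (the paper calls it $(\psi_n)$, you call it $(\zeta_n)$) by reading off particle counts along the space-time grid $(nsN_0, nrN_0)$, observe it is dominated by $\eta_{nsN_0}(nrN_0)$, show it is supercritical for large $N_0$, and invoke Tanny's Theorem~5.5 together with Corollary~6.3 to get exponential growth on a survival event of positive $P_\omega$-probability. The one place you differ is in how you convert the \textsf{P}-a.s.\ convergence $\tfrac{1}{sN}\log E_\omega[\eta_{sN}(rN)]\to\beta(r/s)$ into the supercriticality inequality $\textsf{E}[\log E_\omega[\eta_{sN_0}(rN_0)]]>0$: the paper works with the high-probability set $M_{N_0}=\{\omega:\tfrac{1}{sN_0}\log E_\omega[\eta_{sN_0}(rN_0)]\geq\beta(r/s)-\varepsilon/2\}$ and estimates the integral by splitting over $M_{N_0}$ and its complement, whereas you use the deterministic two-sided bound $\delta_0^{sN_0}\leq E_\omega[\eta_{sN_0}(rN_0)]\leq M^{sN_0}$ to invoke bounded convergence directly. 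Both routes need exactly the same ingredient, namely the uniform lower bound coming from the two-sided ellipticity~\eqref{strong ellipticity}, and yield the same conclusion; your phrasing is arguably a touch cleaner, but it is not a genuinely different argument. Two cosmetic remarks: the quenched growth rate $\tfrac{1}{n}\log\zeta_n\to\textsf{E}[\log\bar m_0]$ holds $P_\omega$-a.s.\ \emph{on the survival event} $\{\zeta_n\not\to 0\}$, not unconditionally (on extinction the limit is $-\infty$); and you should cite Tanny's Corollary~6.3 for the positivity of survival probability for \textsf{P}-a.e.\ $\omega$, since Theorem~5.5 alone identifies the growth rate but does not rule out almost-sure extinction.
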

\begin{proof}
Define
$$M_N:=\left\{\omega\in\Omega:\tfrac{1}{sN}\log E_\omega[\eta_{sN}(rN)]\geq\beta(\tfrac{r}{s})-\tfrac{\varepsilon}{2} \right\}.$$
Then, for every $\varepsilon_0>0$ there exists $N_0=N_0(\varepsilon_0)$ such that
$$\textnormal{\textsf{P}}\big(M_{N_0}\big)\geq1-\varepsilon_0$$
and thus for sufficiently small $\varepsilon_0$ and the corresponding $N_0(\varepsilon_0)$ we have
\begin{eqnarray}
\label{lemma supercrit}
&&\int \log E_\omega \big[\eta_{sN_0}(rN_0)\big] \textnormal{\textsf{P}}(d\omega)\nonumber\\
&\geq& sN_0(\beta(\tfrac{r}{s})-\tfrac{\varepsilon}{2})(1-\varepsilon_0)\nonumber\\
&\geq& sN_0(\beta(\tfrac{r}{s})-\varepsilon) + sN_0(\tfrac{\varepsilon}{2}-\beta(\tfrac{r}{s})\varepsilon_0 + \tfrac{\varepsilon}{2} \varepsilon_0)\nonumber\\
&\geq& sN_0(\beta(\tfrac{r}{s})-\varepsilon)\ >\ 0.
\end{eqnarray}
We now construct a branching process in random environment $(\psi_n)_{n\in\sN_0}$ which is dominated by $\big(\eta_{nsN_0}(nrN_0)\big)_{n\in\sN_0}$. After starting with one particle at $0$ we count all the particles that are at time $sN_0$ at position $rN_0$ and denote this number by $\psi_1$. The remaining particles are removed from the system and no longer considered. After that we count the number of particles at time $2sN_0$ at position $2rN_0$ and denote this number by $\psi_2$. Repeating this procedure yields the process $(\psi_n)_{n\in\sN_0}$ which is supercritical due to \eqref{lemma supercrit}. In fact \eqref{lemma supercrit} and Theorem~5.5 in \cite{tanny} now imply that for sufficiently small $\varepsilon_0$,
\begin{equation} \label{expgr}
\liminf_{n\to\infty} \tfrac{1}{n}\log \eta_{nsN_0}(nrN_0)  \geq sN_0(\beta(\tfrac{r}{s})-\varepsilon)
\end{equation}
a.s.\ on $\{\psi_n\not\to0\}$. Since we assume condition \eqref{strong ellipticity}, Corollary 6.3 in \cite{tanny} implies
\begin{equation}\label{wkeit1}
P_{\omega}(\psi_n \to 0) < 1
\end{equation}
for \textsf{P}-a.e.\ $\omega$.
Combining \eqref{expgr} and \eqref{wkeit1} now completes the proof of the lemma.
\renewcommand{\qedsymbol}{$\square$}
\end{proof}
Lemma \ref{lem2} yields the following
\begin{cor} \label{cor1}
Let $\varepsilon,r,s$ and $N_0$ be as in Lemma~\ref{lem2}. Then there exists $\nu>0$ such that for \textnormal{\textsf{P}}-a.e. $\omega$ there exists an increasing sequence $(x_l)_{l\in\sN_0}=(x_l(\omega))_{l\in\sN_0}$ in $\sN_0$ such that for all $l\in\sN_0$ we have
$$P_{\omega}^{x_l}\left(\liminf_{n\to\infty} \tfrac{1}{nsN_0}\log\eta_{nsN_0}(nrN_0)\geq\beta(\tfrac{r}{s})-\varepsilon\right)>\nu.$$
\end{cor}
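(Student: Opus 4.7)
The plan is to combine Lemma \ref{lem2} with the ergodicity of the i.i.d.\ environment to produce infinitely many good starting positions, and then to transfer the conclusion by the spatial translation-covariance of the BRWRE. To begin I would introduce the quenched success probability
$$g(\omega) := P_\omega^0\!\left(\liminf_{n\to\infty}\tfrac{1}{nsN_0}\log\eta_{nsN_0}(nrN_0)\geq\beta(\tfrac{r}{s})-\varepsilon\right).$$
By Lemma \ref{lem2}, $g(\omega)>0$ for \textsf{P}-a.e.\ $\omega$, so I can pick $\nu>0$ small enough that $p:=\textsf{P}(g>\nu)>0$; this $\nu$ will be the constant the corollary demands.

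Next, since $\textsf{P}=\alpha^{\sN_0}$ is a product measure, the shift $\theta$ on $\Omega^{\sN_0}$ is ergodic. Birkhoff's ergodic theorem applied to $\mathbf{1}_{\{g>\nu\}}$ gives, for \textsf{P}-a.e.\ $\omega$,
$$\lim_{N\to\infty}\tfrac{1}{N}\sum_{x=0}^{N-1}\mathbf{1}_{\{g(\theta^x\omega)>\nu\}}=p>0,$$
so the set $\{x\in\sN_0:g(\theta^x\omega)>\nu\}$ is \textsf{P}-a.s.\ infinite; I would let $(x_l)_{l\in\sN_0}$ be its enumeration in increasing order, so that $g(\theta^{x_l}\omega)>\nu$ for every $l$.

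The final step is spatial covariance: the process started with one particle at $x_l$ in environment $\omega$, after relabeling positions by $y\mapsto y-x_l$, has the same joint law as the process started at $0$ in environment $\theta^{x_l}\omega$. Pushing this through the event defining $g$ yields
$$P_\omega^{x_l}\!\left(\liminf_{n\to\infty}\tfrac{1}{nsN_0}\log\eta_{nsN_0}(nrN_0)\geq\beta(\tfrac{r}{s})-\varepsilon\right)=g(\theta^{x_l}\omega)>\nu,$$
where the displacement $nrN_0$ is read as the offset from the starting point $x_l$, matching the convention used for $P_\omega^x$ throughout the paper.

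The main obstacle I foresee is the bookkeeping of this translation: strictly speaking, under the shift identification, $\eta_{nsN_0}(nrN_0)$ under $P_\omega^{x_l}$ corresponds to $\eta_{nsN_0}(nrN_0-x_l)$ under $P_{\theta^{x_l}\omega}^0$, not to the random variable in the definition of $g$. Since $x_l$ is a fixed integer and $(nrN_0-x_l)/(nsN_0)\to r/s$, continuity of $\beta$ together with either a mild reindexing of the branching process $(\psi_n)$ constructed in Lemma \ref{lem2} (anchored at the shifted target) or the displacement-from-start reading of the statement ensures that the same $\liminf$-bound transfers, giving the claim.
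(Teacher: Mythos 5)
Your proposal is correct and follows essentially the same route as the paper: apply Lemma \ref{lem2} to find $\nu$ with $\textsf{P}(g>\nu)>0$, use Birkhoff's ergodic theorem for the shift on the i.i.d.\ environment to conclude that $\{x:g(\theta^x\omega)>\nu\}$ is \textsf{P}-a.s.\ infinite, and identify $P_\omega^{x}$ with $P_{\theta^x\omega}^0$ via the offset convention for the location argument of $\eta$ (the same reading the paper uses, e.g.\ with $x_l+nrN_0$ in the definition of $A_{x_l}$). The bookkeeping concern you raise is resolved exactly as you suggest; the paper treats this identity as immediate.
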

\begin{proof}
Due to Lemma \ref{lem2} there exists $\nu > 0$ such that
$$ \textnormal{\textsf{P}}\left(\left\{ \omega: \, P_\omega\left(\liminf_{n\to\infty} \tfrac{1}{nsN_0}\log\eta_{nsN_0}(nrN_0)\geq\beta(\tfrac{r}{s})-\varepsilon\right)>\nu \right\}\right) > 0. $$
Since the sequence
\bea
&&\bigg(P^x_\omega\left(\liminf_{n\to\infty} \tfrac{1}{nsN_0}\log\eta_{nsN_0}(nrN_0)\geq\beta(\tfrac{r}{s})-\varepsilon\right) \bigg)_{x \in \sN_0}\\
&=&\bigg(P_{\theta^{x} \omega}\left(\liminf_{n\to\infty} \tfrac{1}{nsN_0}\log\eta_{nsN_0}(nrN_0)\geq\beta(\tfrac{r}{s})-\varepsilon\right)\bigg)_{x\in\sN_0}
\eea
is ergodic with respect to \textnormal{\textsf{P}}, the ergodic theorem yields
$$\lim_{n\to\infty}\tfrac{1}{n}\sum_{k=0}^{n-1} \mathds{1}\Big\{ P_{\theta^{x} \omega}\big(\liminf_{n\to\infty} \tfrac{1}{nsN_0}\log\eta_{nsN_0}(nrN_0)\geq\beta(\tfrac{r}{s})-\varepsilon\big)>\nu\Big\}>0$$
for \textsf{P}-a.e.\ $\omega$ and this completes the proof of the corollary.
\renewcommand{\qedsymbol}{$\square$}
\end{proof}
\renewcommand{\qedsymbol}{$\blacksquare$}
Let $(x_l)_{l\in\sN_0}$ be an increasing sequence of positions as in Corollary~\ref{cor1}. We now show in two steps that a.s.\ on the event of non-extinction there will eventually be some particle at one of the positions $x_l$ such that the descendants of this particle constitute a process with the desired growth.\vspace{12pt}\\
(i) As a first step we show that a.s.\ on the event of survival $(Z_n)_{n\in\sN_0}$ grows as desired along some subsequence $(j+nsN_0)_{n\in\sN_0}$ for some $j\in\{0,\ldots,sN_0-1\}$. To obtain this, as in the proof of Theorem~\ref{GS und EW}, let $\Gamma$ again denote the set of all existing particles and for $\sigma \in \Gamma$ let $\eta_n^{\sigma}(y)$ denote the number of descendants of $\sigma$ among the particles which belong to $\eta_n(y)$. With the sets $(G_l)_{l \in \sN_0}$ as in \eqref{mengeg} and the sequence $(x_l)_{l \in \sN_0}$ as in Corollary \ref{cor1} we define:
\bea
A_{x_l}&:=& \Big\{ \exists\ \sigma \in G_{x_l}: \liminf_{n \to \infty} \tfrac{1}{nsN_0}\log\eta_{\left|\sigma\right|+nsN_0}^{\sigma}(x_l + nrN_0)\geq\beta(\tfrac{r}{s})-\varepsilon  \Big\}
\\ B_{x_l} &:=& \Big\{|G_{x_l}| \geq l \Big\}
\eea
Due to Corollary~\ref{cor1} and since the descendants of all particles belonging to $G_{x_l}$ evolve independently we get
$$ P_{\omega} \left( A_{x_l}^c \cap B_{x_l} \right) \leq (1-\nu)^l \quad \text{for }  \textnormal{\textsf{P}-a.e. } \omega $$
and therefore we conclude with the Borel-Cantelli lemma that
\begin{equation}\label{limsup1}
P_{\omega} \left( \limsup_{l \to \infty} \left( A_{x_l}^c \cap B_{x_l} \right) \right) = 0 \quad \text{for }  \textnormal{\textsf{P}-a.e. }  \omega.
\end{equation}
According to Theorem 5.5 of \cite{tanny} we have a.s.\ exponential growth of the process $\big(|G_l|\big)_{l \in \sN_0}$ on the event of survival and therefore it holds that we have a.s.
$$  \liminf_{l \to \infty} B_{x_l}  = \big\{Z_n\not\to 0\big\}.$$
Together with \eqref{limsup1} this yields
$$\left. P_{\omega} \left( \limsup_{l \to \infty} A_{x_l}^c\ \right|\ Z_n\not\to 0\right) = 0 \quad \text{for }  \textnormal{\textsf{P}-a.e. } \omega.$$
Thus a.s.\ on $\{Z_n\not\to0\}$ there is $l\in\sN_0$ and $\sigma\in G_{x_l}$ such that
$$\liminf_{n \to \infty} \tfrac{1}{nsN_0}\log\eta_{\left|\sigma\right|+nsN_0}^{\sigma}(x_l + nrN_0)\geq\beta(\tfrac{r}{s})-\varepsilon$$
and hence we have for \textsf{P}-a.e. $\omega$
\begin{eqnarray}\label{eq26}
&&P_{\omega}\left(\bigcup_{\sigma\in\Gamma}\left\{\liminf_{n \to \infty} \tfrac{1}{nsN_0}\log Z_{\left|\sigma\right|+nsN_0}\geq\beta(\tfrac{r}{s})-\varepsilon\right\}\ \Bigg|\ Z_n\not\to0\right)\nonumber\\
&=&P_{\omega}\left(\bigcup_{j\in\sN_0}\left\{\liminf_{n \to \infty} \tfrac{1}{nsN_0}\log Z_{j+nsN_0}\geq\beta(\tfrac{r}{s})-\varepsilon\right\}\ \Bigg|\ Z_n\not\to0\right)\nonumber\\
&=&P_{\omega}\left(\bigcup_{j=1}^{sN_0}\left\{\liminf_{n \to \infty} \tfrac{1}{nsN_0}\log Z_{j+nsN_0}\geq\beta(\tfrac{r}{s})-\varepsilon\right\}\ \Bigg|\ Z_n\not\to0\right)\ =\ 1.\hspace{20pt}
\end{eqnarray}
(ii) The last step of this part of the proof is to show that the growth along some subsequence $(j+nsN_0)_{n\in\sN_0}$ already implies sufficiently strong growth of $(Z_n)_{n\in\sN_0}$.\\
Due to the ellipticity condition \eqref{strong ellipticity} we have (recalling $\delta_0 = \delta^2(1-\delta)$)
$$ P_{\omega}^x\big(\eta_i(x) \geq 1\big) \geq \delta_0^i \quad \text{for all } i,x \in \sN_0.$$
A large deviation bound for the binomial distribution therefore implies
\begin{equation}\label{large deviation}
\left.P_{\omega} \left(Z_{n+i} \leq Z_n \cdot \tfrac{\delta_0^i}{2}\ \right|\ Z_n=m \right) \leq \exp(-m \cdot \lambda_0) \quad \forall\ m \in \sN
\end{equation}
for $i \in \{1,...,sN_0\}$ and some constant $\lambda_0 = \lambda_0(N_0) >0$. We now define:
\bea
C_{j,n}&:=& \bigcup_{i=1}^{sN_0}\left\{Z_{j+nsN_0+i}\leq\tfrac{\delta_0^{sN_0}}{2}\exp\big(nsN_0\!\cdot\!(\beta(\tfrac{r}{s})-\varepsilon)\big)\right\}\\
D_{j,n}&:=& \left\{\tfrac{1}{nsN_0}\log Z_{j+nsN_0}\geq\beta(\tfrac{r}{s})-\varepsilon\right\}
\eea
Then due to \eqref{large deviation} for \textsf{P}-a.e.\ $\omega$ we have for all $j\in\{1,\ldots,sN_0\}$
\begin{eqnarray}\label{BC large deviation}
&&P_\omega\left( C_{j,n}\cap D_{j,n} \right)\nonumber\\
&\leq& sN_0\!\cdot\!\exp\big( -\lambda_0 \exp( n\!\cdot\!\lambda_1) \big)
\end{eqnarray}
where $\lambda_1:=sN_0\!\cdot\!(\beta(\tfrac{r}{s})-\varepsilon)$.\\
Since the upper bound in \eqref{BC large deviation} is summable in $n\in\sN_0$, we can apply the Borel-Cantelli lemma and conclude that for \textsf{P}-a.e.\ $\omega$ we have for all $j\in\{1,\ldots,sN_0\}$
\bea
&&\left.P_\omega\left(\limsup_{n\to\infty}C_{j,n}\ \right|\ \liminf_{n\to\infty} D_{j,n} \right)\\
&\leq& P_{\omega}\left(\liminf_{n\to\infty} D_{j,n} \right)^{-1} \cdot \; P_\omega\left( \limsup_{n\to\infty} (C_{j,n}\cap D_{j,n}) \right)\ =\ 0
\eea
Thus for \textsf{P}-a.e.\ $\omega$ we have for all $j\in\{1,\ldots,sN_0\}$
$$\left.P_\omega\left( \liminf_{n\to\infty}\tfrac{1}{n}\log Z_n\leq \beta(\tfrac{r}{s})-2\varepsilon\ \right|\ \liminf_{n\to\infty} D_{j,n} \right)=0$$
and this implies
\begin{equation}
\label{eq34}
P_\omega\left( \liminf_{n\to\infty}\tfrac{1}{n}\log Z_n\leq \beta(\tfrac{r}{s})-2\varepsilon\ \left|\ \bigcup_{j=1}^{sN_0}\liminf_{n\to\infty} D_{j,n} \right)\right.=0.
\end{equation}
Using \eqref{eq26} and \eqref{eq34} we obtain
\bea
&&\left.P_\omega\left(\liminf_{n\to\infty}\tfrac{1}{n}\log Z_n\leq \beta(\tfrac{r}{s})-2\varepsilon\ \right|\ Z_n\not\to0 \right)\\
&\leq&P_\omega\left(Z_n\not\to0\right)^{-1} \cdot P_\omega\left(\left\{ \liminf_{n\to\infty}\tfrac{1}{n}\log Z_n\leq\beta(\tfrac{r}{s})-2\varepsilon\right\} \cap \bigcup_{j=1}^{sN_0}\liminf_{n\to\infty}D_{j,n} \right)\\
&=&0
\eea
which yields
\begin{equation}\label{eq270}
\left.P_{\omega} \left( \liminf_{n \to \infty} \tfrac{1}{n} \log Z_n > \beta(\tfrac{r}{s}) - 2 \varepsilon\ \right|\ Z_n\not\to 0  \right)=1 \quad \text{for }  \textnormal{\textsf{P}-a.e. } \omega.
\end{equation}
Since $r$ and $s$ can be chosen such that $\beta(\tfrac{r}{s})$ is arbitrarily close to $\max_{x \in [0,1]} \beta(x)$, \eqref{eq270} implies \eqref{eq27} as $\varepsilon\to0$ and the proof is complete.\vspace{12pt}\\
\end{proof}
\begin{proof}[\bf Proof of Theorem \ref{phase transitions}]
If $M\leq1$ then
$$\log\left(\frac{m_0h}{1-m_0(1-h)}\right)\leq0\quad\textnormal{\textsf{P}}\text{-a.s.}$$
and therefore Theorem \ref{GS} implies \textit{(i)}.\vspace{12pt}\\
We continue with proving \textit{(ii)} and assume that $M>1$. If $m_0=M$ \textnormal{\textsf{P}}-a.s.\ then
$$\log\left(\frac{m_0h}{1-m_0(1-h)}\right)>0\quad\textnormal{\textsf{P}}\text{-a.s.}$$
and thus $\varphi(h)>0$ for all $h\in(h_{LS},1]$. This case is included in \textit{(c)}.\\
In the following we assume that $m_0$ is not deterministic. We notice that $\varphi$ is finite and continuously differentiable for $h\in\left(h_{LS},1\right]$ since
$$\frac{\partial}{\partial h}\log\left(\frac{m_0h}{1-m_0(1-h)}\right)=\frac{1}{h}-\frac{m_0}{1-m_0(1-h)}$$
is a.s.\ uniformly bounded for $h\in[h_{LS}+\varepsilon,1]$ with $\varepsilon>0$. Thus we have
\begin{equation}\label{abl}
\frac{\partial}{\partial h}\varphi(h)=\textnormal{\textsf{E}}\left[\frac{1}{h}-\frac{m_0}{1-m_0(1-h)}\right].
\end{equation}
Now assume that there exists $h^*\in(h_{LS},1]$ with $\varphi(h^*)=0$. Then
\begin{equation}\label{logeq}
\textnormal{\textsf{E}}\left[\log\left(\frac{m_0}{1-m_0(1-h^*)}\right)\right]= \log\left(\frac{1}{h^*}\right)\, .
\end{equation}
Due to the strict concavity of $y\longmapsto \log y$ we have
\begin{equation}\label{jensen}
\log\left(\textnormal{\textsf{E}}\left[\frac{m_0}{1-m_0(1-h^*)}\right]\right)>\log\left(\frac{1}{h^*}\right)
\end{equation}
by Jensen's inequality and (\ref{logeq}). Thus we obtain that $\varphi$ is strictly decreasing in $h=h^*$ by \eqref{abl} and \eqref{jensen}.\\
Now assume $\varphi(h_{LS})=0$. As above Jensen's inequality yields \eqref{jensen} for $h_{LS}$ instead of $h^*$. Since the mapping
$$h\longmapsto\frac{m_0}{1-m_0(1-h)}$$
is decreasing in $h>1-\frac{1}{m_0}$, we have
$$\lim_{\varepsilon\downarrow0}\, \textnormal{\textsf{E}}\left[\frac{m_0}{1-m_0(1-h_{LS}+\varepsilon)}\right]=\textnormal{\textsf{E}}\left[\frac{m_0}{1-m_0(1-h_{LS})}\right]>\frac{1}{h_{LS}}$$
by the monotone convergence theorem. Thus $\varphi$ is strictly decreasing and therefore negative in $h\in(h_{LS},h_{LS}+\varepsilon)$ for some sufficiently small $\varepsilon>0$.
\vspace{12pt}\\
Now we obtain \textit{(a) -- (c)} by the continuity of $\varphi$ and the fact that $\varphi$ is strictly decreasing in every zero in $[h_{LS},1]$.
\end{proof}

\section{Examples}\label{examples}
\textbf{1.}\quad A basic and natural example to illustrate our results is the following. Let $\mu^{(+)}$ and $\mu^{(-)}$ be two different non-trivial offspring distributions. We define
$$m^{(+)}:=\sum_{k=0}^\infty k\,\mu^{(+)}\!(k)\quad\text{ and }\quad m^{(-)}:=\sum_{k=0}^\infty k\,\mu^{(-)}\!(k)$$
and suppose
$$0<m^{(-)}<m^{(+)}\leq\infty.$$
\begin{figure}[ht]
 \begin{minipage}[t]{.38\linewidth} 
\includegraphics [viewport=100 660 210 820, scale=0.9]{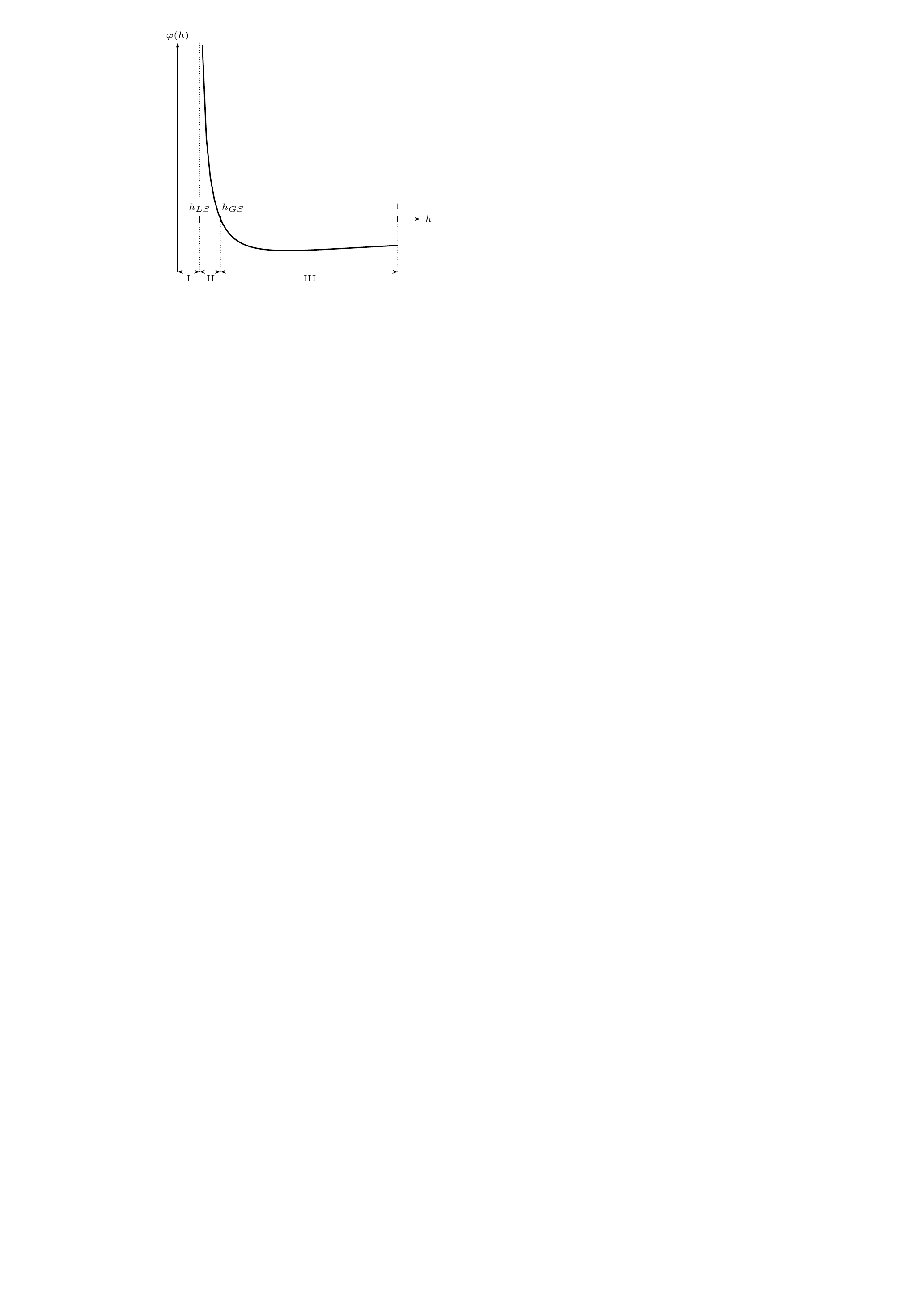}
  \caption{There are three regimes: I: LS, II: GS but no LS, III: no GS}
  \end{minipage}
  \hspace{0.1\linewidth} 
  \begin{minipage}[t]{.38\linewidth} 
\includegraphics [viewport=100 660 210 820, scale=0.9]{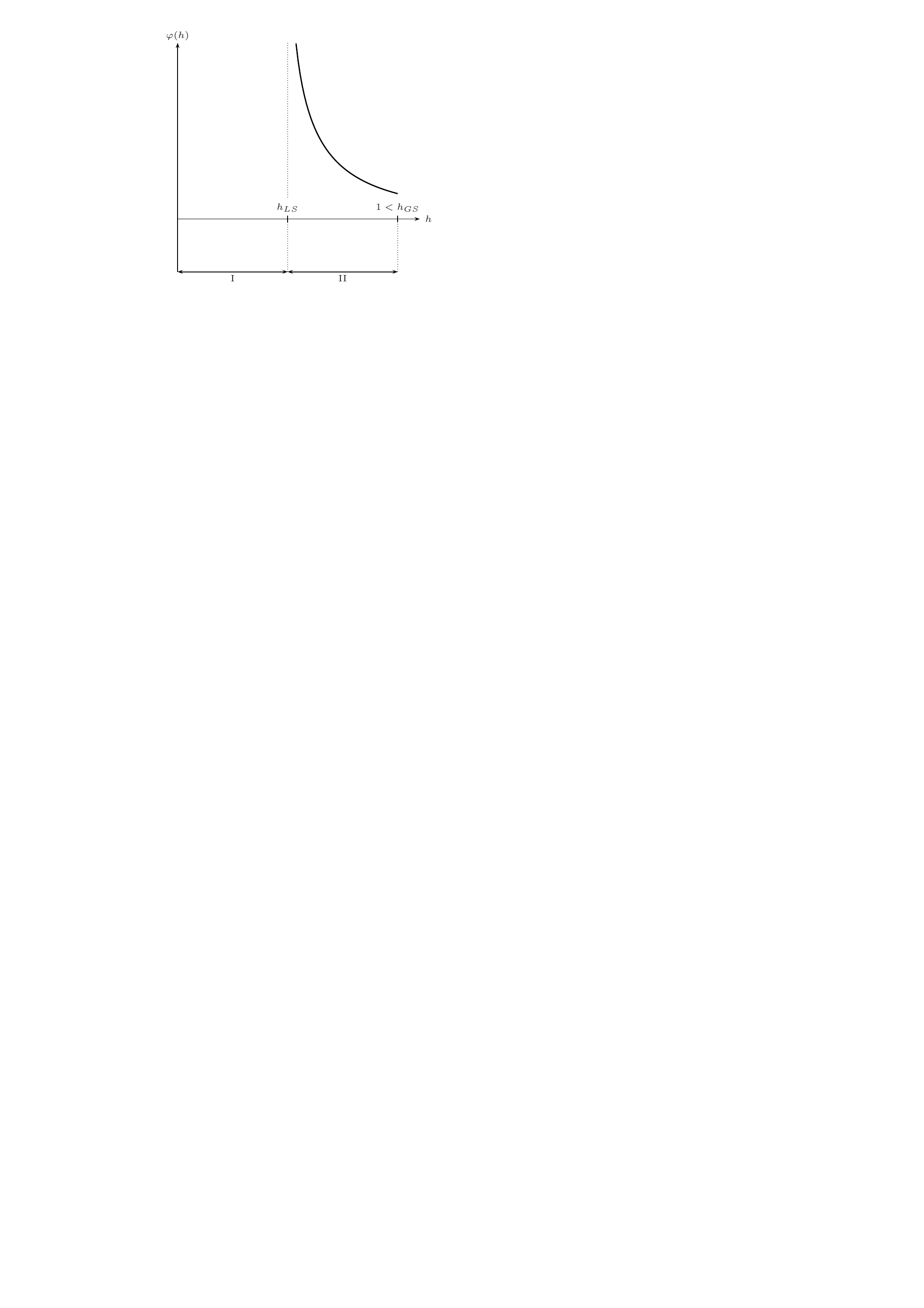}
  \caption{There are two regimes: I: LS, II: GS but no LS}
  \end{minipage}
\end{figure}\\
Furthermore, let
$$\textsf{P}\left(\mu_0 = \mu^{(+)}\right) = 1-\textsf{P}\left(\mu_0=\mu^{(-)}\right)=q\in(0,1).$$
This setting obviously satisfies condition \eqref{ellipticity}.
For figures 1 and 2 we have chosen 
\begin{alignat*}{3}
&q=\frac{3}{4},&\qquad &m^{(+)}=\frac{10}{9},&\qquad &m^{(-)}=\frac{2}{5},\\
&q=\frac{1}{2},&\qquad &m^{(+)}=\,\,2,&\qquad &m^{(-)}=\frac{2}{3},
\end{alignat*}
respectively.
\vspace{12pt}\\
\textbf{2.}\quad As already announced above we now provide an example for a setting in which 
$h_{GS}= h_{LS} < 1$.
Let the law $\textsf{P}^{m_0}$ of the mean offspring $m_0$ be given by
$$\frac{d\textsf{P}^{m_0}}{d\lambda}(x):=1.6\cdot \mathds{1}_{[0.5,1]}(x)+0.2\cdot \mathds{1}_{(1,2]}(x)$$
where $\lambda$ denotes the Lebesgue measure. Obviously $h_{LS}=0.5$ and a simple computation yields
$$\varphi(h_{LS})=0.2\cdot\Big(2\cdot\log(2)\Big)+1.6\cdot\Big(2\cdot\log(2)-1.5\cdot\log(3)\Big)<0.$$\\
\\
\textbf{Acknowledgement:}\\
We thank Sebastian M\"{u}ller for instructive discussions. We also thank the referees for useful comments which helped to improve parts of our results.

\newpage
\bibliographystyle{alpha}

\bigskip\bigskip\bigskip
\noindent
Christian Bartsch, Nina Gantert and Michael Kochler\\
CeNoS Center for Nonlinear Science and Institut f\"ur Mathematische Statistik\\
Universit\"at M\"unster\\
Einsteinstr. 62  \\
D-48149 M\"unster\\
Germany\\
{\tt christian.bartsch@uni-muenster.de}\\
{\tt gantert@uni-muenster.de}\\
{\tt michael.kochler@uni-muenster.de}\\

\end{document}